\newcommand{\Q}{{\mathbb Q}}
\newcommand{\C}{{\mathbb C}}
\newcommand{\Kbar}{{\overline{K}}}
\newcommand{\Z}{{\mathbb Z}}
\newcommand{\N}{{\mathbb N}}
\newcommand{\FF}{{\mathbb F}}
\newcommand{\Gal}{{\rm Gal}}
\newcommand{\GL}{{\rm GL}}
\newcommand{\Aut}{{\rm Aut}}
\newcommand{\Li}{{\rm Li \; }}
\newcommand{\ord}{{\rm ord \; }}
\newcommand{\Frob}{{\rm Frob}}
\newcommand{\lambdanot}{{\lambda_0}}
\newcommand{\ellnot}{{\ell_0}}
\newcommand{\el}{{\ell}}
\newcommand{\TT}{{\mathcal T}}
\newcommand{\half}{\frac 12}
\newcommand{\notdiv}{{\not| ~~}}
\newtheorem{theorem}{Theorem}[section]
\newtheorem{lem}{Lemma}[section]
\newtheorem{prop}{Proposition}[section]
\newtheorem{cor}{Corollary}[section]
\newtheorem{defn}{Definition}[section]
\newtheorem{conjecture}{Conjecture}[section]
\newtheorem{remark}{Remark}[section]
\newcommand{\pf}{{\em Proof.}\hspace{10pt}}
\begin{document}

\input{bezier.sty}

\pagestyle{plain}
\pagenumbering{arabic}

\title{Tate Cycles on Abelian Varieties with Complex Multiplication} 
\author{V. Kumar Murty} 
\thanks{Research of VKM partially supported by an NSERC Discovery Grant.}
\address{Department of Mathematics, University of Toronto, 40 St. George St.,
Toronto, CANADA M5S 2E4 
}
\email{murty@math.toronto.edu}
\author{Vijay M. Patankar}
\address{Indian Statistical Institute c/o Society for Electronic Transactions and Security (SETS), 
MGR Knowledge City, C.I.T. Campus, Taramani, Chennai, INDIA 600113.
}
\email{vijaypatankar@gmail.com}
\date{\today}
%\currenttime

\begin{abstract}
We consider Tate cycles on an Abelian variety $A$ defined over a sufficiently large number field $K$ and having  complex multiplication. We show that 
there is an effective bound $C = C(A,K)$ so that 
to check whether a given cohomology class is a Tate class on $A$, it suffices to check the action of 
Frobenius elements at primes $v$ of norm $ \leq C$. 
We also show that for a set of primes  $v$ of $K$ of density $1$, the space of Tate cycles on the special fibre $A_v$ of the N\'eron model of $A$ is isomorphic to the space of Tate cycles on $A$ itself. 
\end{abstract}

\maketitle
 
%\section{Introduction}
\section{Introduction and Statement of Results}\label{intro}

%We are interested in the relationship between Tate cycles on an Abelian variety over a number field and Tate cycles on the reduction of this variety modulo a prime of this field. 

%\section{Notation and Background}
Let \( K \) be a 
%field that is finitely generated over its prime field. In our case, \( K \) will in fact be a 
number field or a finite field. Let \( A \) be an Abelian variety defined over $K$ of dimension \( d \), say. For a prime 
\( \ell \) with \( \ell \notdiv char(K) \),
%\(  \ell \not| char(K) \)
and \( n \geq 1 \), we have 
%\[
%A [ l^n ] \simeq (\Z / l^n \Z )^{2d} 
%\]
%and taking the limit over \(  n \), we have 
the \( \ell \)-adic Tate module
\[
T_\el (A) = \projlim A[ \ell^n] \simeq \Z_\el^{2d} .
\]
In a natural way, it is a \( G_K = Gal ( \Kbar / K ) \)-module. Set also
\[
V_\el (A) = T_\el (A) \otimes_{\Z_\el } \Q_\el . 
\]
Set \( \bar{A} = A \times_K \bar{K} \).
Then, we may identify $V_\ell(A)$ with $H_{1,\ell}(\overline{A})$ and the cohomology of $A$
with the exterior algebra on the dual of $V_\ell(A)$. 
 As usual, for a $G_K$-module $W$, we denote by $W(k)$ the $k$-fold Tate twist of $W$.
%Then the \( l \)-adic cohomology of \( A \) is given by 
%\begin{eqnarray*}
%H^1_l ( \bar{A} ) & = & V_l (A )\check{} = Hom ( V_\ell (A) , \Q_\ell ) \\
%H_l^{\ast} ( \bar{A} ) & = & \wedge^\ast H^1_l ( \bar{A} ) 
%\end{eqnarray*}
\medskip\par
For any field \( F \) with \( K \subseteq F \subseteq \bar{K} \), set 
\begin{eqnarray*}
\TT_\el^k ( A ,  F) & = & H^{2k}_\el ( \bar{A} )(k)^{ Gal(\bar{K}/F )} \\
\TT_\el^{\ast} ( A ) & =  & \bigoplus_k  \sum_{K \subseteq F \subseteq \bar{K} } \TT_\el^k ( A ,  F) 
\end{eqnarray*}
This is the space of Tate cycles on \( A \). It is a finite dimensional \( \Q_\el \) vector space and we may in fact restrict the field $F$ above to be of finite degree over $K$. Thus, for some finite extension \( M \) of \( K \), we have
\[
\TT_\el^{\ast} ( A ) =  \oplus_k H^{2k}_\el ( \bar{A} )(k)^{Gal(\bar{K}/M )}  = \oplus_k \TT_\el^k ( A, M ) 
\]
Again, for \( K \subseteq F \subseteq \bar{K} \), let \( Z^k ( A, F) \) denote the free Abelian group generated by 
algebraic cycles of co-dimension \( k \) on \( A \) 
(modulo homological equivalence)
with a representative defined over \( F \). The 
\( \el \)-adic cycle class map is 
\[
c_{\el,k,F} ~: ~  Z^k ( A, F) \otimes_{\Z} \Q_l \rightarrow \TT_\el^k (A , F) .
\]
\begin{conjecture}\cite{Tate-Conj}(Tate)\label{Tateconjecture1}
The $\ell$-adic cycle class map is surjective. 
\end{conjecture}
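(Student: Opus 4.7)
In the CM setting of this paper, the plan is to reduce the Tate conjecture to the algebraicity of Hodge classes, proceeding in three steps.

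First, I would exploit the structure of the $\ell$-adic representations attached to abelian varieties with complex multiplication. By results of Serre (and, in the good reduction case, Shimura--Taniyama), after replacing $K$ by a sufficiently large finite extension, the image of $\Gal(\Kbar/K)$ acting on $V_\el(A)$ is open in the $\ell$-adic points of the Serre group $S_A$ attached to the CM type of $A$. Consequently $\TT_\el^k(A,K)$ coincides with the space of $S_A(\Q_\ell)$-invariants in $H^{2k}_\el(\bar{A})(k)$. This replaces a question about a profinite Galois group by a question about a torus.

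Second, I would transfer the question to singular cohomology. Fixing an embedding $\Kbar\hookrightarrow\C$ and using the comparison isomorphism $H^{2k}_\el(\bar{A})\simeq H^{2k}(A(\C),\Q)\otimes_\Q\Q_\ell$, the action of the Serre group corresponds to the action of the Mumford--Tate group of $A(\C)$, which in the CM case is a torus. Its invariants on the Betti cohomology (with the appropriate Tate twist) are precisely the Hodge classes on $A(\C)$. Thus Tate classes and Hodge classes coincide on $A$. In codimension one one concludes by Lefschetz's $(1,1)$ theorem, and the conclusion extends to classes generated by divisors. In higher codimension, one would invoke Deligne's theorem that Hodge cycles on abelian varieties are absolute Hodge, which controls their behaviour consistently across the different cohomology theories.

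The main obstacle is exactly this final algebraicity step. Deligne's theorem yields the correct transcendental compatibility but stops short of producing algebraic representatives: constructing an algebraic cycle whose class equals a prescribed Hodge class is the Hodge conjecture, which for CM abelian varieties is open in codimension $\geq 2$. Partial progress (Pohlmann, Hazama, Shioda, and others) covers CM types in which every Hodge class is a polynomial in divisor classes, so that Lefschetz $(1,1)$ suffices; outside that regime the construction of algebraic representatives appears to require input beyond the formalism recalled in this introduction. For this reason the statement is recorded as a conjecture, and the later results of the paper concern Galois-theoretic aspects of $\TT_\el^\ast(A)$ rather than the cycle class map itself.
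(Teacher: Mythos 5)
You have correctly recognized that this statement is a \emph{conjecture}, not a theorem: the paper records it as Conjecture~\ref{Tateconjecture1} and offers no proof, citing Tate's original paper. There is therefore no argument in the paper to compare yours against, and a ``proof'' is not expected. Your discussion is a sound account of the standard reduction and of exactly where it stalls: for a CM abelian variety over a sufficiently large $K$, the image of Galois on $V_\el(A)$ is open in the $\ell$-adic points of the Serre/Mumford--Tate torus, so Tate classes and Hodge classes agree; Lefschetz $(1,1)$ disposes of codimension one (matching the paper's remark that the case $k=1$ is known by Tate over finite fields and Faltings over number fields); Deligne's absolute Hodge theorem controls compatibility across cohomology theories but does not produce algebraic representatives; and beyond divisor-generated classes one hits the Hodge conjecture, open for CM abelian varieties in codimension $\geq 2$ outside special CM types (Pohlmann, Hazama, Shioda, et al.). Your final paragraph accurately states the obstruction and explains why the paper treats the statement as a conjecture and instead studies Galois-theoretic aspects of $\TT_\el^\ast(A)$. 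In short: nothing to fix here, because there is nothing to prove; you have correctly identified the status of the statement and the nature of the gap.
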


This is known for $k=1$ by Tate in the case $K$ is a finite field and by Faltings in the case $K$ is a number field.
This conjecture is not known to be independent of \( \el \). 
\medskip\par
%%%%
There is a related conjecture involving \( L \)-functions associated to \( A \). 
%If \( K \) is a finite field of $q$ elements,
%let \( \sigma \) denote the element of \( G_K \) given by \( x \mapsto x^q \).
%Let \( M \) be a finite dimensional 
%\( \Q_\ell[G_K] \)-module for which the characteristic polynomial of $\sigma$ has $\Q$-coefficients. For $s \in \C$, define
%\[
%L ( M , s ) = \det ( 1 - \sigma |_M  q^{-s} )^{-1} 
%\]
% If \( K \) is a number field,
%for each finite prime \( v \) of \( K \) denote by \( A_v \) the fibre at \( v \) of the N\'eron model of \( A \). Set
%\[
%L_v ( H_\el^j(\bar{A}), s ) = L ( H_\el^j (\bar{A_v})^{I_v} , s )
%\]
%
%where \( I_v \) is the inertia group at a prime over \( v \). Also set
%\[
%L ( H_\el^j (\bar{A} ), s ) =  \prod_v  L_v ( H_\el^j (\bar{A} ) , s ) .
%\]
%Since the eigenvalues of \( \sigma \) acting on \( H_\el^j ( \bar{A_v} ) \) are of absolute value \( q^{-j/2} \), this infinite product converges absolutely for \( Re (s) > 1 + \frac{j}{2} \). 

\begin{conjecture}(Tate)\label{Tateconjecture2}
If \( K \) is a number field, \( L ( H_\el^{2k} (\bar{A} ), s ) \) has a meromorphic continuation to \( Re(s) = 1 + k \) and 
\[
\ord_{s=1+k} L ( H_\el^{2k} (\bar{A} ), s ) = - \dim \TT_\el^k ( A, K ) .
\]
If \( K \) is a finite field, 
\[
\ord_{s=k} L ( H_\el^{2k} (\bar{A} ), s ) = - \dim \TT_\el^k ( A, K ) .
\]
\end{conjecture}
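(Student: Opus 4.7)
The plan is to exploit the CM structure of $A$ to reduce this conjecture to known analytic properties of Hecke $L$-functions. We concentrate on the number field case; the finite field case follows from Honda--Tate theory by direct computation with Frobenius eigenvalues. By the ``sufficiently large'' hypothesis of the abstract we may assume $K$ contains the reflex field of the CM type of $A$ and that all endomorphisms of $A$ are defined over $K$. Shimura--Taniyama together with Serre--Tate then gives a decomposition of $V_\el(A) \otimes_{\Q_\el} \overline{\Q}_\el$ into one-dimensional $G_K$-stable subspaces, each corresponding via class field theory to an algebraic Hecke character of $K$. Forming the $2k$-th exterior power of the dual and applying the Tate twist, we obtain
\[
H_\el^{2k}(\bar A)(k) \otimes_{\Q_\el} \overline{\Q}_\el \;\cong\; \bigoplus_j \psi_j
\]
as $G_K$-modules, where each $\psi_j$ arises from a unitary (weight-zero) algebraic Hecke character of $K$.

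The Euler product factors in parallel as $L(H_\el^{2k}(\bar A), s) = \prod_j L(\psi_j, s-k)$. Each factor is a Hecke $L$-function, and by Hecke's theorem extends meromorphically to all of $\C$. Standard non-vanishing on the line $\Re(s) = 1$ says that $L(\psi_j, s)$ has a simple pole at $s=1$ when $\psi_j$ is trivial and is both holomorphic and nonzero at $s=1$ otherwise. Summing these contributions,
\[
-\ord_{s=1+k} L(H_\el^{2k}(\bar A), s) \;=\; \#\{\, j : \psi_j = \mathbf{1}\,\} \;=\; \dim H_\el^{2k}(\bar A)(k)^{G_K} \;=\; \dim \TT_\el^k(A, K),
\]
which is the desired equality.

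The main obstacle is to perform the CM decomposition in a way that respects the $G_K$-action over $K$ itself rather than over an auxiliary splitting field: the natural decomposition lives over an extension of the coefficient field $\Q_\el$, and one must verify via Frobenius reciprocity, or equivalently by noting that the characters $\psi_j$ occur in $\Gal(\overline{\Q}_\el/\Q_\el)$-orbits whose sums descend to $\Q_\el$-rational summands, that the count of trivial $\psi_j$ agrees with $\dim_{\Q_\el} \TT_\el^k(A, K)$. The only other nontrivial analytic input is the classical non-vanishing of Hecke $L$-series on $\Re(s) = 1$, which lies outside the geometric formalism but is well established. A secondary technical point is ensuring that the CM decomposition of $V_\el(A)$ is compatible with cup-product in cohomology, so that the $\psi_j$ are literally the products $\psi_{i_1}^{-1}\cdots\psi_{i_{2k}}^{-1}$ of Hecke characters attached to $V_\el(A)$, twisted by the $k$-th power of the cyclotomic character; this is standard but demands careful bookkeeping.
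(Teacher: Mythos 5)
The item you are addressing is labeled a \emph{conjecture} in the paper, and it is stated that way deliberately: the authors never prove it (they invoke Conjectures~\ref{Tateconjecture1} and~\ref{Tateconjecture2} only as background for the problems they do study). There is consequently no ``paper's proof'' to compare against, and it is a mistake to treat the statement as a theorem to be established.

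More importantly, the conjecture as written in the paper is about an arbitrary abelian variety $A$ over a number field or finite field --- the CM hypothesis is not assumed at this point in the paper (it is introduced only later, in Theorem~\ref{finitebound} and its sequels). Your argument invokes the Shimura--Taniyama decomposition of $V_\el(A)$ into one-dimensional Galois-stable pieces, which is available only for CM abelian varieties over a sufficiently large field of definition. For a general $A$ over a number field, $L(H_\el^{2k}(\bar A), s)$ does not factor into Hecke $L$-functions, and meromorphic continuation past $\Re(s) = k + 1/2$ (let alone to $\Re(s)=1+k$) together with the non-vanishing needed for the order-of-pole identity is wide open. So your argument proves a (classically known) special case, not the statement.

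Within that special case the outline is essentially right: after the Tate twist each $\psi_j$ is unitary, Hecke's analytic continuation applies, non-vanishing on $\Re(s)=1$ handles the nontrivial $\psi_j$, and the trivial ones each contribute a simple pole. The ``descent'' concern you raise is actually a non-issue: $\dim_{\Q_\el}\TT_\el^k(A,K) = \dim_{\overline{\Q}_\el}\bigl(H_\el^{2k}(\bar A)(k)\otimes\overline{\Q}_\el\bigr)^{G_K}$ because dimension is preserved under flat base change, so the count of trivial $\psi_j$ matches without any Frobenius-reciprocity argument. For the finite-field case, you also need Tate's theorem that Frobenius acts semisimply on $H^1_\el$ of an abelian variety over a finite field; without semisimplicity the multiplicity of the eigenvalue $q^k$ in the characteristic polynomial would only give an upper bound on $\dim\TT_\el^k(A,K)$, not an equality. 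But the structural problem remains: this is a proof of a known sub-case, not of the conjecture as stated, and the paper itself makes no claim to prove it.
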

%\begin{remark} 
%If $K$ is a finite field, Conjectures \ref{Tateconjecture1} and \ref{Tateconjecture2} are equivalent. 
%If $K$ is a number field,  the second Tate conjecture \ref{Tateconjecture2} for all \( \el \) and the first Tate Conjecture \ref{Tateconjecture1} for one \( \el \) implies that the first Tate Conjecture \ref{Tateconjecture1} holds for all \( \el \). 
%\end{remark}
\medskip\par
%\remark ~~ For Abelian varieties of CM-type, the second Tate Conjecture \ref{Tateconjecture2} is known 
%by the work of Yoshida \cite{Yoshida}, building on earlier work of Shimura \cite{Shimura} and known properties of 
%Hecke \( L \)-functions. 

%\section{Reduction of Tate classes modulo \( v \): Statement of Results} 

Our aim is to study (in the case \( K \) is a number field) the relationship between \( \TT_\el^\ast ( A) \) and  
\( \TT_\el^{\ast} ( A_v ) \). Here \( v \) is a prime of \( K \) of good reduction for $A$, \( \ell \) is assumed to be 
distinct from the characteristic of the residue field \( k_v \) (of \( K \) at \( v \)) and 
\( A_v \) stands for the reduction of \( A \) modulo \( v \). 
\medskip\par  
At a prime $v$ of good reduction for $A$, we have the natural reduction map, 
\[
A \rightarrow A_v ,
\]
which induces a natural map (under pull-back) 
$$
H_\el^{\ast} ( \overline{A_v} ) \simeq H_\el^{\ast} ( \overline{A} ) .
$$
Let us set
\begin{equation}\label{isom-of-cohomologies-modulo-v}
\iota_v: H_\el^{\ast} ( \overline{A} )\ \longrightarrow\ H_\el^{\ast} ( \overline{A_v} )
\end{equation}
be the inverse of the above isomorphism. 
We may view both sides as $\Q_\ell[G_v]$-modules where \( G_v \subset G_K \) is \emph{the} decomposition group at 
\( v \) (subgroup of \( G_K \) unique up to conjugation). Indeed, the left hand side 
\( H_\el^{\ast} ( \overline{A} ) \) is naturally a \( G_v \)-module under restriction from \( G_K \).
The right hand side is naturally a $\Q_\ell[G_{k_v}]$ module 
where $G_{k_v}$ is the Galois group of the residue field $k_v$ at $v$. Since \( G_{k_v} \) is naturally a quotient of \( G_v 
\), we can consider \( H_\el^{\ast} ( \overline{A_v} ) \) as a  $\Q_\ell[G_v]$-module.  Also, as $v$ is a prime 
of good reduction, the criterion of N\'eron-Ogg-Shafarevich \cite{Serre-Tate} implies that the inertia subgroup at $v$ acts trivially on \( H_\el^{\ast} ( \overline{A} ) \). Thus, the two sides in 
\eqref{isom-of-cohomologies-modulo-v} are isomorphic as $\Q_\ell[G_v]$-modules and so, restriction  
 gives us a map
\begin{equation}\label{Tate-classes-mod-v}
\iota_v ~ : ~ \TT_\el^{\ast} ( A ) \rightarrow \TT_\el^{\ast } ( A_v ) 
\end{equation} 
which is in fact injective.
% By abuse of notation, given \( \omega \in H^{2m}_{\ell} ( \overline{A} )(m) \), by 
%\( \iota_v ( \omega )  \) we mean the corresponding element of \( H^{2m}_{\ell} ( \overline{A_v} )(m) \) as in 
%\eqref{isom-of-cohomologies-modulo-v}.

%as follows: Due to finite dimensionality of \( \TT_\el^{\ast} ( A ) \), it follows that there exists a finite extension \( M / K \) such that if \( \omega \in \TT_\el^{\ast} ( A ) \), then, 
%\[
%\omega \in H_\el^{\ast} ( \overline{A} )^{Gal( \overline{K} / M ) } \subseteq H_\el^{\ast} ( \overline{A} ) 
%\simeq H_\el^{\ast} ( \overline{A_v} ) .
%\]
%
%Thus, in fact, \( \omega \) can be considered as an element of \( H_\el^{\ast} ( \overline{A_v} ) \), but since 
%it is invariant under \( Gal( \overline{K} / M ) \), it is invariant under \( G_w \) where \( G_w \) is the decomposition group of some finite place \( w \) of \( M \) lying over \( v \). This description also makes evident the fact that \( \iota_v \) is injective.
\medskip\par
We begin with the following observation.
A class \( \omega \in H_\el^{2m}( \bar{A})(m) \) for which $\iota_v(\omega)$  is in \( 
\TT_\el^m ( A_v , k_v) \) at almost all places \( v \) of \( K \) (i.e. at all but finitely many places of \( K \)) is in 
\( \TT_\el^m ( A , K) \). 
Indeed, this follows from the Chebotarev density theorem. This observation can be strengthened as follows.

\medskip\par

\begin{prop}
Let \( S \) be a set of places of \( K \) with positive Dirichlet density, say \( \delta > 0 \). A class \( \omega \in 
H_\el^{2m}( \bar{A})(m) \) for which $\iota_v(\omega)$  is in \( \TT_\el^m ( A_v , k_v) \) at all places \( v \) in \( S \) is 
in \( \TT_\el^m ( A ) \). 
\end{prop}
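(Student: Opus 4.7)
The strategy combines Faltings's semisimplicity theorem with the Chebotarev density theorem.

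First, by Faltings's theorem, the $G_K$-representation $V := H^{2m}_\el(\bar{A})(m)$ is semisimple. I would decompose
\[
V \;=\; \mathcal{T}\oplus V'
\]
as $G_K$-modules, where $\mathcal{T} := \TT^m_\el(A)$ is the full Tate-class subspace and $V'$ is a $G_K$-stable complement. The maximality of $\mathcal{T}$ forces $(V')^{G_M} = 0$ for every finite extension $M/K$: any vector of $V'$ fixed by some $G_M$ would be a Tate class in $V'\cap\mathcal{T} = 0$. Writing $\omega = \omega_0 + \omega'$ with $\omega_0\in\mathcal{T}$ (automatically a Tate class) and $\omega'\in V'$, it suffices to show $\omega' = 0$. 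For each $v\in S$, the hypothesis $\iota_v(\omega)\in\TT^m_\el(A_v,k_v)$ translates, via the good-reduction isomorphism and the triviality of inertia on $V$, into $\rho(\Frob_v)\omega = \omega$; applying the $G_K$-equivariant projection onto $V'$ gives $\rho(\Frob_v)\omega' = \omega'$ for every $v\in S$.

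Next, suppose for a contradiction that $\omega'\neq 0$, and consider the closed subgroup
\[
H \;:=\; \{\,g\in G_K : \rho(g)\omega' = \omega'\,\}\;\subseteq\;G_K,
\]
which contains the chosen $\Frob_v$ for every $v\in S$. Since $(V')^{G_M} = 0$ for every finite extension $M/K$, $H$ cannot contain $G_M$ for any such $M$; hence $H$ is non-open in $G_K$. Writing $H = \bigcap_L (H\cdot G_L)$ as $L$ ranges over finite Galois extensions of $K$, the open subgroups $H\cdot G_L$ correspond to sub-extensions $K\subseteq K_L\subseteq L$ with $[K_L:K] = [G_K : H\cdot G_L]\to\infty$ as $L$ grows. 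The contradiction then comes from Chebotarev: since $\Frob_v\in H$ implies $\Frob_v|_L\in H_L$ (the image of $H$ in $\mathrm{Gal}(L/K)$), one bounds the density of $S$ in terms of $|H_L|/|\mathrm{Gal}(L/K)| = [K_L:K]^{-1}$, which tends to zero with $L$ and contradicts $\delta>0$. Therefore $\omega' = 0$ and $\omega = \omega_0\in\mathcal{T}$.

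\textbf{Main obstacle.} The delicate point is the final Chebotarev bound. When $H_L$ is normal in $\mathrm{Gal}(L/K)$, classical Chebotarev gives directly $\mathrm{density}\{v : \Frob_v|_L\in H_L\} = |H_L|/|\mathrm{Gal}(L/K)|\to 0$. In the general case one must control the conjugation envelope $\bigcup_\tau \tau H_L\tau^{-1}$, which need not shrink even as $[K_L:K]\to\infty$; this can be handled either by passing to a larger $L$ containing the Galois closures so as to make the relevant subgroups normal, or by invoking the stronger form of Chebotarev for Jordan-measurable subsets of the Galois image under a coherent choice of Frobenii, which yields $\mathrm{density}\{v : \Frob_v\in H\} = \mu_{G_K}(H) = 0$ directly (since $H$ is closed and non-open in the compact group $G_K$).
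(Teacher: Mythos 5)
Your proof takes a genuinely different route from the paper's, and the divergence is instructive. The paper does not invoke Faltings's semisimplicity theorem at all. Instead it works directly with the closed subgroup $G_S\subseteq\Gal(L/K)$ topologically generated by the chosen Frobenius lifts $\{\Frob_v : v\in S\}$, where $L$ is the fixed field of $\ker\rho_\ell$. Since $\omega$ is fixed by each generator it is fixed by $G_S$, and the paper then argues via Chebotarev that the fixed field $L^{G_S}$ is finite over $K$ of degree at most $1/\delta$; hence $\omega\in\TT^m_\ell(A,L^{G_S})$. This is more economical than your argument (no appeal to Faltings, no contradiction, and it yields a quantitative degree bound) and it applies directly to $\mathrm{Stab}(\omega)$ rather than requiring the decomposition $V=\mathcal{T}\oplus V'$. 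Your decomposition and the reduction to $\omega'=0$ are correct as far as they go, but the semisimplicity input is extraneous for this statement.

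The substantive problem is in the step you yourself flag as the delicate point, and neither of the two fixes you sketch actually closes it. There is no ``stronger form of Chebotarev for a coherent choice of Frobenii'' yielding $\mathrm{density}\{v:\Frob_v\in H\}=\mu_{G_K}(H)$ when $H$ is a non-conjugation-invariant closed subgroup: the Chebotarev density theorem is intrinsically a statement about conjugacy classes, and what it controls is $\mu\bigl(\bigcup_\tau \tau H\tau^{-1}\bigr)$. For a closed subgroup $H$ of infinite index this union of conjugates can have positive Haar measure (for instance, the stabilizer of a point for the affine action of $\Z_p\rtimes\Z_p^\times$ on $\Z_p$ has infinite index but its conjugation envelope has measure bounded away from $0$), so ``$H$ closed and non-open $\Rightarrow$ density $0$'' is false in the generality you invoke. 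Likewise, enlarging $L$ cannot turn a non-normal subgroup into a normal one. So as written, the contradiction at the end of your argument is not established. (For what it is worth, the paper's own phrase that the primes of $S$ ``split completely'' in $L^{G_S}$ is itself only literally justified if all conjugates of $\Frob_v$ lie in $G_S$, so the published proof is terse at the same spot; but pointing at that does not discharge the gap in yours.) If you want to keep your structure, you need to either work with the normal core of $H$, or replace the final step with the paper's generated-subgroup argument and supply the Chebotarev bound there carefully.
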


\pf Note that the \( \ell \)-adic representation \( \rho_\ell : G_K \rightarrow Aut( H_\el^{2m}( \bar{A})(m) )  \) is 
unramified at all but finitely many places \( v \) of \( K \). Let \( L \) be the sub-extension of \( \bar{K} \) fixed by
\( Ker(\rho_\ell ) \). Then \( L \) is a Galois extension of \( K \) and is unramified at all except finitely many places of 
\( K \). At a finite prime \( v \), to be a Tate class means that it is fixed by the \emph{Frobenius automorphism} in 
\( Gal ( \overline{k_v} / k_v ) \) given by \( x \mapsto x^{\N v } \), where \( \N v \) is the cardinality of \( k_v \) (also 
the norm of \( v \)). The Frobenius automorphism lifts to \( Frob_v \) (at almost all \( v \)) and thus, by 
\eqref{isom-of-cohomologies-modulo-v} as above and the assumption, \( \omega \) is fixed by 
\( Frob_v \in Gal(L/K ) \). 
Let \( G_S \) be the group generated by \( \{ {\textrm Frob}_v ~| ~ v \in S \} \) in \( Gal (L/K) \) as a topological group. Let \( L^{G_S} \) be the sub-extenstion of \( L \) cut out by \( G_S \). The places of \( S \) split 
completely in \( L^{G_S} \). This implies that $L^{G_S}$ is a finite extension of $K$. Moreover, by the Chebotarev density theorem, \( [ Gal(L/K) : G_S ] = [ L^{G_S} : K ] \leq 1 / \delta \). Thus, it 
follows that if for all \( v \in S \), $\iota_v(\omega)$  is in \( \TT_\el^m ( A_v , k_v) \), then \( \omega \) is in \( \TT_\el^m ( A , L^{G_S}) \), i.e. it is a Tate class.
\qed

\medskip\par
With more work, we will show that in the case of Abelian varieties with complex multiplication, the condition of the
above result can be replaced with a condition at a {\em finite} set of primes. Moroever, this can be done in an effective manner. Here also, the role of split primes is crucial.
\medskip\par

We recall that for each prime $\ell$, Serre and Tate \cite{Serre-Tate} define a conductor of $A$ (in terms of the Artin and the 
Swan character). It is an integer that they prove (\cite{Serre-Tate}, pp. 499-500) is independent of $\ell$. 
\begin{theorem}\label{finitebound}
Let $A$ be an Abelian variety with complex multiplication and let $K$ be a number field over which both $A$ and its
endomorphisms are defined. Denote by $d$ the dimension of $A$ and by $N$ the \emph{conductor} of $A$.  Then, there is an effective bound \( C = C (A, K) \)  such that if 
\[
\omega \in H_{\ellnot}^{2m}( \bar{A} ) (m), \textrm{~for some prime~} \ellnot , 
\] 
is a Tate class considered as an element of \( H_{\ellnot}^{2m}( \bar{A_v} ) (m) \) for all \( v \) of \( K \) 
with \( N_{K/\Q} v \leq C \), then \( \omega \) is Tate, i.e. \( \omega \in \TT_\ellnot^m ( A , K) \). 
\end{theorem}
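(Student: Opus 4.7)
The plan is to reduce the theorem to an effective Chebotarev density statement for a finite collection of abelian extensions of $K$ whose conductors and degrees are controlled by $N$, $d$, and $[K:\Q]$. Because $A$ has complex multiplication with all endomorphisms defined over $K$, the standard theory of complex multiplication implies that the image of $\rho_{\ellnot} : G_K \to \Aut(V_{\ellnot}(A))$ is abelian. Consequently the action of $G_K$ on $W := H^{2m}_{\ellnot}(\overline{A})(m)$ is abelian, and after extension of scalars to $\overline{\Q}_{\ellnot}$ we obtain a decomposition into character eigenspaces
\[
W \otimes_{\Q_{\ellnot}} \overline{\Q}_{\ellnot} \;=\; \bigoplus_{\chi} W_\chi,
\]
where each $\chi$ is (by Weil) an $\ellnot$-adic realization of an algebraic Hecke character of $K$, unramified outside the primes dividing $\ellnot N$.

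Writing $\omega = \sum_\chi \omega_\chi$, the class $\omega$ lies in $\TT_{\ellnot}^m(A,K)$ if and only if $\omega_\chi = 0$ for every nontrivial $\chi$ occurring in $W$; and $\omega$ is fixed by $\Frob_v$ if and only if $\omega_\chi = 0$ for every $\chi$ with $\chi(\Frob_v) \neq 1$. Thus it suffices to exhibit, for every nontrivial character $\chi$ that occurs in some $H^{2m}_{\ellnot}(\overline{A})(m)$ with $0 \leq m \leq d$, a prime $v$ of $K$ of good reduction for $A$, not dividing $\ellnot$, of norm at most $C$, such that $\chi(\Frob_v) \neq 1$. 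Equivalently, if $L_\chi$ denotes the finite abelian extension of $K$ cut out by $\chi$, we need a prime of $K$ of norm at most $C$ that does not split completely in $L_\chi/K$.

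Since each such $\chi$ is obtained from $V_{\ellnot}(A)$ by tensor operations, both $[L_\chi : K]$ and the conductor of $L_\chi/K$ admit explicit bounds in terms of $d$, $[K:\Q]$, and $N$. An unconditional effective Chebotarev density theorem for abelian extensions (such as Lagarias--Odlyzko, or sharper bounds derived from zero-free regions of Hecke $L$-functions in the abelian setting) then yields, for each $\chi$, an explicit upper bound on the norm of the least prime $v$ with $\chi(\Frob_v) \neq 1$. Taking the maximum over the finite set of characters $\chi$ that appear in the relevant cohomology groups, together with the norms of the finitely many bad primes and primes dividing $\ellnot$ that must be excluded, produces the constant $C = C(A,K)$ of the theorem.

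The principal difficulty is ensuring that $C$ is genuinely independent of $\ellnot$. A priori the set of characters $\chi$ depends on $\ellnot$, and one must use the fact that for a CM abelian variety the $\{V_{\ellnot}(A)\}_{\ellnot}$ form a strictly compatible system arising from a fixed finite family of algebraic Hecke characters of $K$ determined by the CM type. This furnishes $\ellnot$-independent bounds on the conductors and degrees of the $L_\chi$, after which the classical effective Chebotarev bounds for abelian extensions supply the claimed $\ellnot$-independent constant $C(A,K)$.
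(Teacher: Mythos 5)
Your overall architecture is sound and parallels the paper's proof: decompose the Galois module generated by $\omega$ into one-dimensional character eigenspaces coming from the Shimura--Taniyama theory (this is the paper's Theorems~\ref{ShTaCMCohoStruct} and \ref{CMCoho-lambda-adic}, giving the $H_{I,\lambda}$), translate ``Tate over $K$'' into a statement about the trivial character, and use strict compatibility of the $\lambda$-adic family to eliminate the dependence on $\ellnot$. The reduction of the problem to bounding, for each relevant nontrivial $\chi$, the least prime $v$ with $\chi(\Frob_v)\neq 1$ is also the same reduction that the paper makes (via its Main Lemma~\ref{Forum}).

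However, there is a genuine gap in the step where you introduce ``$L_\chi$, the finite abelian extension of $K$ cut out by $\chi$'' and propose to apply effective Chebotarev to $L_\chi/K$. The characters $\chi$ occurring in the decomposition of $H^{2m}_{\ellnot}(\overline A)(m)\otimes\overline{\Q}_{\ellnot}$ are $\ellnot$-adic realizations of algebraic Hecke characters (products of the $\psi_i$ twisted by a power of the cyclotomic character), and these generically have \emph{infinite} image in $\overline{\Q}_{\ellnot}^\times$; consequently $\ker\chi$ is not an open subgroup of $G_K$, $L_\chi$ is an infinite extension, and the ``degree of $L_\chi$'' you propose to bound in terms of $d$, $N$, $n_K$ does not exist. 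Effective Chebotarev (or the effective ``least non-split prime'' estimates of \cite{forum}, \cite{XLI}, \cite{Vaaler-Voloch}) simply does not apply to such a $\chi$ directly. This is precisely the technical difficulty that the paper's Main Lemma~\ref{Forum} is designed to overcome: one must pass to a residual representation $\overline\rho_\lambda$ modulo an auxiliary prime $\lambda\mid\ell$ (with $\ell\neq\ellnot$) chosen so that the eigenvalues of the offending $\Frob_v$ remain distinct modulo~$\lambda$; only then does one get a \emph{finite} extension (the fixed field of $\ker\overline\rho_\lambda$, with degree and discriminant controlled by $\ell$, $N$, $d$, $n_E$) to which Theorem~\ref{unconditional-forum} can be applied. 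Moreover, the choice of $\ell$ must itself be bounded, and the paper achieves this by the self-referential ``least prime with non-scalar Frobenius'' argument: pick the least $v$ at which $\Frob_v$ fails to act as a scalar, choose $\ell \ll \log(d_E(\N v)^{wd^2})$ so that the eigenvalues of $\Frob_v$ stay distinct mod $\lambda$, apply the non-split-prime bound to get a $v'$ with $\N v' \ll$ (something small in $\N v$), and use minimality of $v$ to close the loop and obtain an absolute bound. Your proposal skips this mechanism entirely, so as written it does not produce a finite bound $C$. Fixing it would amount to reconstructing the Main Lemma.

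A secondary point: you should make explicit whether the hypothesis ``$\iota_v(\omega)$ is a Tate class on $A_v$'' means fixed by $\Frob_v$ itself (Tate over $k_v$), or fixed by some power of $\Frob_v$ (Tate over $\overline{k_v}$). Your step asserting ``$\omega$ is fixed by $\Frob_v$ iff $\omega_\chi=0$ for every $\chi$ with $\chi(\Frob_v)\neq 1$'' uses the first reading; under the second reading you would instead need $\chi(\Frob_v)$ to fail to be a root of unity of bounded order, and the bound on the order comes from Theorem~\ref{fieldofdefinition}. Either way this can be handled, but the argument as written elides the distinction.
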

\medskip\par
Several remarks are in order. 
\medskip\par
\begin{remark} 
{\rm The main point of the above theorem is that the bound \( C \) (defined by \eqref{definitionC}) depends only on the Abelian variety \( A \) (in fact, only on the conductor, the dimension and the discriminant of the field of complex multiplication) and 
the number field \( K \) and does not depend on \( \ellnot \). However, if we fix a prime \( \ell \), it is possible to prove 
(using the Chebotarev density theorem and Nakayama's lemma for finitely generated modules over \( \Z_\ell \)) the existence of  a 
\emph{constant} (that would depend on \( A \), \( K \) and \( \ell \)) so that similar statement as in the theorem above is 
true.}
\end{remark}
%%\medskip\par
\begin{remark} {\rm The proof of Theorem \ref{finitebound} will produce an explicit expression for $C$ (as defined by \eqref{definitionC}) only in terms of the 
degree $n_K$, the discriminant $d_K$ of $K$, the conductor $N$, the dimension $d$ of $A$ and
the discriminant \( d_F \) of the field $F$ of multiplication of $A$. We have not made any effort to find an 
optimal bound. However, we note that with the assumption of the Riemann Hypothesis for Dedekind zeta functions, the estimate 
for $C$ that the proof of Theorem \ref{finitebound} yields can essentially be replaced by $(\log C)^2$.
It would be very interesting to study to what extent it would be possible to get a bound which is uniform in each of the 
parameters $n_K, d_K, N, d_F $ and $d$.}
\end{remark}
\begin{remark} {\rm Tate \cite{Tate} (section 3, pp.76-77) discusses the notion of an ``almost algebraic'' cycle. A Tate cycle 
is said to be \emph{almost algebraic} if all but finitely many of its specializations are algebraic. According to Tate, this 
notion of almost algebraic cycle seems to be part of the folklore.  Tate adds: ``it is mentioned explicitly 
in \cite{Serre-Bourbaki}, 5.2 that K\"unneth components of the ``diagonal'' are almost algebraic (by part (1) of Theorem 2 of 
\cite{Ka-Me})''. Thus these ``K\"unneth components of the diagonal'' are examples of almost algebraic cycles that are not known 
to be algebraic. Tate further mentions a weaker conjecture than Conjecture (1.1): The space of Tate cycles is spanned by almost 
algebraic cycles. }
\end{remark}
\begin{remark}
{\rm Let us assume the Tate conjecture for Abelian varieties over finite fields. Suppose $\omega$ is a class in $H^{2m}_\ell(\overline{A})(m)$ with the property that $\iota_v(\omega)$ is algebraic for each prime $v$ of $K$ of norm
$\leq C$. Then $\iota_v(\omega)$ is algebraic for all but finitely many $v$. Indeed, by the Theorem, $\omega$ is a Tate class,
and hence so is its reduction $\iota_v(\omega)$. Now by our assumption, it follows that $\iota_v(\omega)$  is algebraic. 
Hence $\omega$ is almost algebraic. 
}
\end{remark}

Our next result is about the field of definition of Tate cycles on an Abelian variety over a finite 
field obtained by reduction of Tate cycle on a fixed Abelian variety over a number field.

\begin{theorem}\label{fieldofdefinition}
Let $A$ be an Abelian variety defined over a number field $K$. Then, there is a bound $D = D(A)$ so that for all
$v$ of good reduction, all Tate cycles on $A_v$ are defined over an extension of the residue field $k_v$ of degree $\leq D$. 
\end{theorem}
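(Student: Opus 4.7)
The plan is as follows. Via the comparison isomorphism $\iota_v$ we identify $H^{2k}_\el(\overline{A_v})(k)$ with $H^{2k}_\el(\overline{A})(k)$. A class $\omega$ in this space is a Tate cycle on $A_v$ if and only if $\Frob_v^n \omega = \omega$ for some $n \geq 1$, and the minimal such $n$ is precisely the degree over $k_v$ of the field of definition of $\omega$. Since $T^n - 1$ is separable in characteristic zero, every element of $\ker(\Frob_v^n - I)$ is a sum of eigenvectors of $\Frob_v$ whose eigenvalues are $n$-th roots of unity. Hence the Tate cycles on $A_v$ are exactly the sums of eigenvectors of $\Frob_v$ whose eigenvalues are roots of unity, and each such cycle is defined over the extension of $k_v$ of degree equal to the least common multiple of the orders of the participating roots of unity. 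It therefore suffices to bound, uniformly in $v$ and $k$, the orders of root-of-unity eigenvalues of $\Frob_v$ on $H^{2k}_\el(\overline{A_v})(k)$.

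The characteristic polynomial $P_v(T)$ of $\Frob_v$ on $V_\el(A_v)$ lies in $\Z[T]$ and has degree $2d$; its splitting field $L_v \subset \bQ$ therefore satisfies $[L_v : \Q] \leq (2d)!$. Writing $q = \N v$ and letting $\alpha_1,\dots,\alpha_{2d}$ be the roots of $P_v$, the eigenvalues of $\Frob_v$ on $H^{2k}_\el(\overline{A_v})(k) = (\wedge^{2k} V_\el(A_v))(k)$ are products $\alpha_{i_1}\cdots\alpha_{i_{2k}} q^{-k}$, and these lie in $L_v$ because $q = \alpha_i \overline{\alpha_i} \in L_v$ by the Weil Riemann hypothesis. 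Consequently any such root-of-unity eigenvalue belongs to the finite cyclic group $\mu(L_v)$ of roots of unity in $L_v$. If $w_v$ denotes its order, then $\Q(\zeta_{w_v}) \subseteq L_v$ forces $\phi(w_v) \leq [L_v : \Q] \leq (2d)!$, so $w_v \leq B := \max\{n \in \N : \phi(n) \leq (2d)!\}$, a finite quantity depending only on $d = \dim A$.

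Setting $D = \mathrm{lcm}\{n \geq 1 : \phi(n) \leq (2d)!\}$, we obtain $\Frob_v^D \omega = \omega$ for every Tate cycle $\omega$ on $A_v$, so that every such cycle is defined over an extension of $k_v$ of degree at most $D$. The argument has no genuine obstacle; the mildest subtlety is that a Tate cycle which is a sum of eigenvectors attached to distinct root-of-unity eigenvalues still needs to be fixed by $\Frob_v^D$, and this is guaranteed by the cyclicity of $\mu(L_v)$. Sharper bounds are available under additional hypotheses: for $A$ of CM type, $L_v$ lies inside the Galois closure of the CM field of $A$, so the exponent of $\mu(L_v)$ is bounded purely in terms of that fixed field, yielding a much smaller admissible $D$.
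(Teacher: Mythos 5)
Your proof is correct and follows essentially the same route as the paper: both arguments observe that a Tate cycle on $A_v$ over the degree-$n$ extension of $k_v$ corresponds to an eigenvalue $\alpha_I = q^k\zeta$ with $\zeta$ a root of unity, and then bound the order of $\zeta$ via a bound on its algebraic degree. The only cosmetic difference is that you work inside the splitting field $L_v$ of $P_v(T)$ and use $[L_v:\Q]\le (2d)!$, while the paper bounds $\deg \alpha_I$ directly by $\binom{2d}{2k}$ (a slightly sharper constant), and you make the final uniform exponent $D$ explicit as an $\mathrm{lcm}$.
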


\begin{remark} {\rm In fact our proof of this theorem can be suitably adapted to show that 
all the Tate cycles on an Abelian variety \( A \) over a number field \( K \) with complex multiplication by \( F \) are defined over a `specific' extension of \( K \) that depends on \( d = \dim (A) \), the conductor of \( A \) and the normal closure of $F$.}
\end{remark}

As is well known (and as implied by \eqref{Tate-classes-mod-v}) the dimension of the space of Tate cycles does not decrease 
under reduction modulo \( v \). Our next result shows that in the CM case, for a set of primes of Dirichlet density $1$, it 
does not increase either. The proof of this theorem uses Thereom (\ref{fieldofdefinition}).

\begin{theorem}\label{CM}
Let \( A \) be an Abelian variety of CM type. Let \( K \) be sufficiently large so that all the Tate cycles on \( A \) and all the endomorphisms of \( A \) are defined over \( K \). Then, for a set of primes \( v \) of \( K \) of Dirichlet density \( 1 \),
\[
\TT_\el^{\ast} ( A ) \simeq \TT_\el^{\ast} ( A_v ) .
\]
In particular, the Tate conjecture for \( A \) implies the Tate conjecture for \( A_v \) for a set of primes \( v \) of 
Dirichlet density \( 1 \). 
\end{theorem}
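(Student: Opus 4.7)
Set $V = H^{2m}_\el(\overline{A})(m)$ and let $H = \rho_\el(G_K) \subset \GL(V)$ denote the image of the Galois representation. Since $A$ is of CM type and $K$ contains its endomorphism field, $H$ is an abelian compact $\el$--adic Lie group, and the action on $V$ is semisimple, so we may write $V = V^H \oplus W$ with $W$ containing no nonzero $H$--invariant vector. The hypothesis that every Tate cycle on $A$ is already defined over $K$ forces $V^{H'} = V^H$ for every open subgroup $H' \leq H$; extending scalars to $\bar{\Q}_\el$ and decomposing $W$ into character eigenspaces, this translates into the statement that every character $\chi : H \to \bar{\Q}_\el^{\,\times}$ appearing in $W$ has \emph{infinite} order, for a nontrivial finite--order $\chi$ would have kernel of finite index and produce extra invariants over some finite extension of $K$.

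By Theorem~\ref{fieldofdefinition} there is a constant $D = D(A)$ such that, setting $N = \mathrm{lcm}(1,2,\ldots,D)$, every Tate class on $A_v$ is fixed by $\Frob_v^N$ for every prime $v$ of good reduction; hence $\TT_\el^m(A_v) = V^{\Frob_v^N}$. Since $\iota_v$ is injective, it suffices to show that for a density--$1$ set of primes $v$ one has $V^{\Frob_v^N} = V^H$, equivalently that no eigenvalue of $\Frob_v$ on $W$ is an $N$--th root of unity. After the character decomposition of $W$, this reduces to showing that for each character $\chi$ occurring in $W$ the set
\[
S_\chi \;=\; \{\, v \text{ of good reduction} : \Frob_v \in \ker(\chi^N) \,\}
\]
has Dirichlet density $0$.

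Since $\chi$ has infinite order, so does $\chi^N$, and $\ker(\chi^N)$ is therefore a closed subgroup of $H$ of infinite index. Choose a neighborhood base $\{H_n\}$ of the identity in $H$ consisting of open normal subgroups with $\bigcap_n H_n = \{1\}$; then the indices $[H : \ker(\chi^N)\cdot H_n]$ tend to infinity as $n \to \infty$. The finite quotient $H/H_n$ is realized as $\Gal(L_n/K)$ for a finite Galois extension $L_n/K$ unramified outside a finite set of primes, so classical Chebotarev applied in $\Gal(L_n/K)$ gives
\[
\delta(S_\chi) \;\leq\; \frac{1}{[H : \ker(\chi^N)\cdot H_n]},
\]
and letting $n \to \infty$ yields $\delta(S_\chi) = 0$. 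Taking the union over the finitely many characters occurring in $W$, together with the finite set of primes of bad reduction, gives a density--$0$ exceptional set, on whose complement $\iota_v$ is bijective. The Tate conjecture consequence follows at once, since reductions of algebraic cycles on $A$ remain algebraic on $A_v$.

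The main difficulty lies in the last step: converting the purely topological fact that $\ker(\chi^N)$ has infinite index in the compact $\el$--adic Lie group $H$ into a density--$0$ statement for the Frobenius sequence. The assumption that $K$ is large enough is essential here, for it guarantees that every $\chi$ appearing in $W$ has infinite order, so that the induced characters on the finite quotients $H/H_n$ have orders tending to infinity, allowing a cofinal application of the classical Chebotarev density theorem.
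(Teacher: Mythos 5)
Your proof is correct, and it takes a genuinely different route from the paper's. The paper works explicitly with the CM Hecke characters $\psi_I$ coming from Shimura--Taniyama theory: it shows that if $\TT_\el^k(A_v)$ is strictly larger than $\TT_\el^k(A)$ on a positive-density set of $v$, then some $\psi_{J_0}$ satisfies $\psi_{J_0}(\Frob_v)=\zeta_n(\N v)^k$ on a positive-density set, and then invokes Rajan's strong multiplicity one theorem (Theorem~2 of \cite{Rajan-smo-l}) applied to $\psi_{J_0}^n$ and $\chi_\el^{kn}$ to conclude $\psi_{J_0}^n=\chi_\el^{kn}\chi_1$ with $\chi_1$ of finite order, producing a Tate cycle over a finite extension of $K$ and hence a contradiction with the largeness hypothesis on $K$. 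You instead work abstractly with the abelian image $H=\rho_\el(G_K)$, decompose $V=V^H\oplus W$ into eigenspaces over $\overline{\Q}_\el$, observe that every character $\chi$ occurring in $W$ has infinite order (else its kernel produces a Tate cycle over a finite extension, same contradiction), and then directly prove $\delta(S_\chi)=0$ by a limiting Chebotarev argument over the finite quotients $H/H_n$. In effect you are re-deriving from scratch the one-dimensional case of Rajan's theorem (positive density $\Rightarrow$ finite-order twist), which makes your argument more self-contained, whereas the paper uses Rajan as a black box and stays closer to the explicit CM combinatorics. Both proofs rely on Theorem~\ref{fieldofdefinition} in the same way, to bound the order of the root of unity by $N=\mathrm{lcm}(1,\dots,D)$ so that $\TT_\el^m(A_v)=V^{\Frob_v^N}$; and both derive the key contradiction from the hypothesis that $K$ is large enough that all Tate cycles on $A$ are already $K$-rational.

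Two small points worth tightening in your write-up. First, you assert that the $G_K$-action on $V$ is semisimple; this is not automatic for an abelian image but does follow from Theorem~\ref{CMCoho-lambda-adic} (the Shimura--Taniyama decomposition into one-dimensional $G_K$-stable lines), so you should cite that. Second, in the Chebotarev limit step you should replace $\delta(S_\chi)$ by the upper density $\overline{\delta}(S_\chi)$ in the intermediate inequalities, since the density of $S_\chi$ is only shown to exist a posteriori; the argument then gives $\overline{\delta}(S_\chi)\le [H:\ker(\chi^N)H_n]^{-1}\to 0$, whence $\delta(S_\chi)=0$.
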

 
\begin{remark} {\rm The condition that \( K \) be sufficiently large is necessary as the following example illustrates: Let \( E \) be an elliptic curve over \( \Q \) with complex multiplication by an imaginary quadratic field, 
say \( K = \Q ( \sqrt{-d} )\), with \( d \) square-free and positive integer. Let \( A = E \times E \). 
Then, \( Z^1 ( A, K ) \) is generated by \( E \times \{ 0 \} \), \( \{ 0 \} \times E \), the diagonal \( \Delta := \{ ( x, x ) | x \in E \} \) and the graph of complex multiplication 
$$
 \Delta^{\prime} := \{ ( x , [ \sqrt{-d} ] x ) | x \in E \},
 $$
  where \( [ \sqrt{-d} ~] \) denotes the endomorphism of \( E \) whose square is \( [ - d ~] \). 
\medskip\par
We remark that \( E \) has super-singular reduction at a prime \( p \) of \( \Q \) which remains inert in \( K \) and of good reduction for 
\( E \). Let \( A_p \) denote the reduction of \( A \) modulo \( p \) over \( \FF_p \). Then, \( Z^1 (A_p , \bar{\FF_p} ) \) is 
generated by \( E_p \times \{ 0 \}  \),  \( \{ 0 \} \times E_p \), \( \Delta_p \) (the reduction of \( \Delta \) modulo \( p 
\)) , \( \Delta^{\prime}_p \) (the reduction of \( \Delta^{\prime} \) modulo \( p \)), \( \Delta_\phi := \{ ( x, \phi (x) ) | x 
\in E_ p \} \) and \( \Delta_\psi :=  \{ ( x, \psi (x) ) | x \in E_ p \} \), where \( \phi \) and \( \psi \) are the other two 
generators of \( End_{\bar{\FF_p}} (E_p) \). This proves that for such primes \( p \), the rank of \( \TT_\ell^1 (A) \) 
is \( 4 \) and the rank of  \( \TT_\ell^1 (A_p ) \) is 6. Thus, the \emph{reduction} map as in (\ref{Tate-classes-mod-v}) is 
strictly injective for a set of primes \( p \) of \( \Q \) of Dirichlet density \( \frac{1}{2} \)! However, this is not a 
problem as the set of primes of \( \Q \) that are inert in \( K \), has Dirichet density \( 0 \) considered as a set of primes 
of \( K \). }
\end{remark}

\begin{remark} {\rm The theorem is not true for Abelian varieties without complex multiplication as the following example illustrates\footnote{We thank the referee for suggesting us to bring this to the attention of the reader.}: Let \( E \) be an elliptic curve over \( \Q \) without complex multiplication. 
Let \( A = E \times E \) be as above. Then, \( Z^1 ( A , K ) \) is generated by \( E \times \{ 0 \} \), \( \{ 0 \} \times E 
\) and the diagonal \( \Delta \) for any number field \( K \). Let \( v \) be a finite place of \( K \) 
of good reduction for \( E \) and let \( k_v \) be the finite residue field at \( v \). 
Then, \( E \) always acquires the extra endomorphism (the \emph{Frobenius}) and \(  Z^1 ( A_v , k_v ) \) has rank at least \( 4 \). 
This implies that \( \TT_\ell ( A) \) strictly injects into \( \TT_\ell ( A_v ) \) for all but finitely many places of \( K \). }
\end{remark}
\begin{remark}
{\rm The above result is, in some sense, related to our previous work \cite{MurtyPatankar}. There, we studied the problem of when $A_v$ stays simple for a set of primes of  positive density, given that $A$ is simple (or absolutely simple).  If $A_v$ splits, this gives rise to `extra' classes in the N\'eron-Severi group of $A_v$ (and hence also extra Tate cycles on $A_v$).}
\end{remark}
\medskip\par
In section \ref{lambda}, we recall some basic properties of a compatible family of $\lambda$-adic representations.
In section \ref{Shimura-Taniyama}, we recall some basic properties of the theory of complex multiplication.
In section \ref{least}, we develop an analytic estimate that will be crucial in the proofs of our main results.
In section \ref{MainLemma}, we present the Main Lemma and in the following three sections, we give the proofs of Theorems \ref{finitebound}, \ref{fieldofdefinition} and \ref{CM}.

\section{Brief background on compatible family of \( \lambda \)-adic representations}\label{lambda}

We recall (from \cite{Serre-Abelian-l-adic}) some basics about \emph{compatible} families of \( \lambda \)-adic Galois representations. 
\medskip\par
Let \( K \) be a number field. Let \( \Kbar \) be a separable algebraic closure of \( K \). Let \( G_K \) denote the 
Galois group of \( \Kbar \) over \( K \). Let \( E_\lambda \) be a non-archimedean local field (finite extension of 
some \(p \)-adic field). Let \( V \) be a vector space over \( E_\lambda \). Let \( \Aut (V) \) be the general linear 
group of \( V \) with topology induced by that on \( End (V) \): If \( n = \dim (V) \), we have \( \Aut (V) \simeq \GL 
( n, E_\lambda ) \).

\begin{defn}
A \( \lambda \)-adic representation of \( G_K \) \rm{(} or of \( K \) \rm { ) } is a continuous homomorphism \( \rho : 
G_K \rightarrow \Aut (V) \), where \( V \) is vector space over a non-archimedean local field \( E_\lambda \), a 
finite extension of \( \Q_p \).
\end{defn}

For any number field \( F \), by \( \Sigma_F \) we denote the set of finite places of \( F \). 
%If \( L \) is an algebraic extension of \( \Q \), one defines \( \Sigma_L \) to be the projective limit of the sets
%\( \Sigma_{{L_\alpha}} \), where \( L_\alpha \) range over sub-fields of \( L  \) of finite degree over \( \Q \).
%\medskip\par
Recall that a \( \lambda \)-adic representation \( \rho \) is said to be {\em rational} (respectively, {\em integral}) if there exists a finite subset \( S \) of 
\( \Sigma_K \) such that
\begin{enumerate}[(a)]
\item For any place \( v \) in \( \Sigma_K - S \), \( \rho \) is unramified at \( v \).
\item For \( v \notin S \), the coefficients of \( P_{v, \rho} ( T ) \) belong to \( \Q \) \rm{(}respectively, \( \Z \) \rm{)}.
\end{enumerate}

\medskip\par

Let \( E \) be a number field. For each finite prime \( \lambda \) of \( E \), let \( \rho_\lambda \) be a rational \( \lambda \)-adic representation of \( K \).  For any given finite prime \( \lambda \) of \( E \), let us denote by
\( \ell_\lambda \) the prime of \( \Q \) that lies below the prime \( \lambda \) of \( E \). The system
\( \{ \rho_\lambda \} \) is said to be {\em strictly compatible} if there exists a finite subset \( S \) of
\( \Sigma_K \), called the \emph{exceptional} set, such that:
\begin{enumerate}[(a)]
\item For \( S_\lambda := \{ v | ~ v ~ \mathrm{lies~over}~ \ell_\lambda  \} \) and every \( v \notin S \cup S_\lambda \), \( \rho_\lambda \) is unramified at \( v \) and \( P_{v, \rho_\lambda } ( T ) \) has rational integral coefficients.
\item \( P_{v, \rho_\lambda} (T) = P_{v, \rho_{\lambda^{\prime} } } ( T ) \) if
\( v \notin S \cup S_\lambda \cup S_{\lambda^{\prime} } \).
\end{enumerate}

\begin{defn} Let \( \rho  (= \rho_\lambda )\) be a \( \lambda \)-adic representation of \( G_K \). Then, we say that \( \rho \) is pure of weight \( w  \in \Z \) if there is a finite set \( S \) of finite places of \( K \) such that, for each finite place \( v \notin S \cup S_\lambda \), \( \rho ( Frob_v ) \) is unramified, and the eigenvalues of
\( \rho ( Frob_v ) \) are algebraic integers whose complex conjugates have complex absolute value \( q_v^{w/2} \), where \( q_v \) is the cardinality of the residue field of \( K \) at \( v \). We say that a compatible family \( \{ \rho_\lambda \} \) is of weight \( w \in \Z \) if for each finite place \( \lambda \) of \( E \), \( \rho_\lambda \) is of pure weight \( w \).
\end{defn}
\medskip\par
%Lastly, there is a very important notion of the \emph{conductor} of a strictly compatible system of \( \lambda \)-adic Galois %representations. This is defined in section (2.1) and section (4.1) of \cite{Serre-DPP-Conductor-l-adic}. 

%\begin{defn}
%Conductor of a strictly compatible system of \( \lambda \)-adic Galois representations:
%\end{defn}
\medskip\par
As usual, we denote by $\chi_\ell$ the cyclotomic character giving the action of $\Gal(\overline{K}/K)$ on $\ell$-power
roots of unity. The $\{ \chi_\ell \}$ form a compatible family of $\ell$-adic representations.

\section{Abelian Varieties with Complex Multiplication and Shimura-Taniyama theorem}\label{Shimura-Taniyama}

We now briefly recall a few basic facts about Abelian varieties of \emph{Complex Multiplication} type (for short, 
Abelian varieties with CM). The main references are \cite{Lang-CM}, \cite{Pohlmann}, \cite{Serre-Tate}, \cite{Shimura}.  

Let \( A \) be an Abelian variety defined over a number field \( K \) of dimension 
\( d := \dim (A) \). Let \( F \) be a number field of degree \( 2 d \). Then, \( A \) is said to have Complex 
Multiplication by \( F \) (or for short, CM by \( F \)) if there exists an embedding
\[
\iota : F \rightarrow End_\Q ( A ) := End_{ \overline{\Q} }  ( A ) \otimes_\Z  \Q  .
\]
It is a fact that such an \( F \) must be a totally imaginary extension of a totally real number field. 
%%%%%%%%%%%%%%%%%
\begin{comment}
Thus, given an element \( \phi \in Hom ( F, \C ) \), we can define \( \bar{\phi} \in Hom (F, \C ) \) as \( \phi \) followed by 
the complex conjugation in \( \C \). To the pair \( ( A , \iota ) \), one associates a subset \( \Phi := \{ \phi_1 , 
\ldots , 
\phi_d \} \subset Hom ( F, \C ) \) such that no two \( \phi_i \) are complex conjugates of each other (section 2 of 
Lang \cite{Lang-CM}). Thus,
\(  \Phi \bigcup \overline{\Phi } = Hom ( F, \C ) \). We then say that \( ( A , \iota ) \) is an Abelian variety of type 
\( ( F , \Phi ) \). 
\end{comment}
%%%%%%%%%%%%%%%%%
Let \( E \) be the Galois closure of \( F \). To such an Abelian variety, the Shimura-Taniyama theory associates Hecke 
characters \( \psi_{i,\lambda} \) of the following type. For each \( 1 \leq i \leq 2 d \) and for each finite place 
\( \lambda \) of \( E \) (lying over a prime \( \ell \)), we have a family of continuous characters of the Galois group \( G_K := Gal ( \Kbar / K ) \):
\[
\psi_{i , \lambda} : G_K \rightarrow E_\lambda^\times 
\]
with the property that for all \( v \) not dividing \( \ell N \), 
\( \psi_{i,\lambda} ( Frob_v ) \in F^\times \).
%, where \( \sigma \in Hom_\Q ( F , \C ) \).  
%Thus, with this identification of values of \( \psi_{i,\ell} \) at Frobenii elements inside \( F^{\times} \), we have 
%\( \overline{ \psi_{i,\ell } } (Frob_v)  = \psi_{i+d, \ell} (Frob_v ) \) for \(  1 \leq i \leq d \). 
Thus, the field generated by \( \psi_{i,\lambda} (Frob_v ) \) for all \( v \) over \( \Q \) is contained in \( E \).  
We have the following well-known result (\cite{Pohlmann}, Theorem \( 2^{\prime} \), page 171):

\begin{theorem}(Shimura-Taniyama) \label{ShTaCMCohoStruct} (With notations and definitions as above) 
Let \( A \) be an Abelian variety defined over a number field with Complex Multiplication \( F \). Let \( H^k_\ell ( \overline{A} ) \) be the \( k \)-th \( \ell \)-adic \'{e}tale cohomology of \( A \). Then,
\[
H^k_\ell ( \overline{A} ) \otimes_{\Q_\ell }  \overline{\Q_\ell} = \wedge^k H^1_\ell ( \overline{A} ) \otimes_{\Q_\ell }  \overline{\Q_\ell}  = \oplus_I H_I ,
\]
where the direct sum is over subsets \( I \subseteq \{ \psi_{1, \lambda}, \ldots , \psi_{2d,\lambda} \} \) of cardinality \( k \) and where each \( H_I \) is a one dimensional \( G_K \) invariant \( \overline{\Q_\ell} \) subspace of \( H^k_\ell ( \overline{A} ) \otimes_{\Q_\ell } \overline{\Q_\ell} \). Further more, for any finite place \( v \) of \( K \) away from \( \ell \) and from places of bad reduction for \( A \), and for all \( x \in H_I \) 
\[
( Frob_v )^{-1} ( x ) = ( \prod_{ \psi_{i, \lambda} \in I } \psi_{i,\lambda} ( \Frob_v ) ) \cdot x 
\]
\end{theorem}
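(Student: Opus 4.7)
The plan is to reduce everything to the rank-one statement for $V_\ell(A)$ over $F\otimes_\Q\Q_\ell$ and then pass to cohomology and wedge products. The first step is the standard identification of the cohomology ring of an abelian variety: because $H^1_\ell(\overline{A})$ generates the cup-product algebra freely up to the commutation relations, we have a canonical isomorphism
\[
H^k_\ell(\overline{A})\ \cong\ \wedge^k H^1_\ell(\overline{A})
\]
as $G_K$-modules. Hence it suffices to produce the desired decomposition for $k=1$ and then take exterior powers.

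Next I would analyse $V_\ell(A)$ (the $\Q_\ell$-dual of $H^1_\ell(\overline{A})$). Since $A$ has CM by $F$ and the endomorphisms are defined over $K$, we obtain an action of $F\otimes_\Q\Q_\ell$ on $V_\ell(A)$ that commutes with $G_K$. A dimension count ($\dim_{\Q_\ell}V_\ell(A)=2d=[F:\Q]$) together with the faithfulness of the CM action forces $V_\ell(A)$ to be free of rank one over the commutative, semisimple $\Q_\ell$-algebra $F\otimes_\Q\Q_\ell$. After extending scalars to $\overline{\Q_\ell}$, the algebra splits as $F\otimes_\Q\overline{\Q_\ell}\cong\prod_{\phi}\overline{\Q_\ell}$, the product being taken over the $2d$ embeddings $\phi:F\hookrightarrow\overline{\Q_\ell}$ (each factoring through the Galois closure $E$ and a fixed embedding $E\hookrightarrow E_\lambda\hookrightarrow\overline{\Q_\ell}$). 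The orthogonal idempotents of this product produce a decomposition
\[
V_\ell(A)\otimes_{\Q_\ell}\overline{\Q_\ell}\ =\ \bigoplus_{i=1}^{2d}W_i,\qquad \dim_{\overline{\Q_\ell}}W_i=1,
\]
and because the CM action commutes with $G_K$, each $W_i$ is $G_K$-stable. The $G_K$-action on $W_i$ is therefore a character, which I would define to be $\psi_{i,\lambda}$.

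Passing to the dual and taking the $k$-th exterior power gives
\[
\wedge^k H^1_\ell(\overline{A})\otimes\overline{\Q_\ell}\ =\ \bigoplus_{|I|=k}H_I,
\]
where $H_I$ is the one-dimensional $\overline{\Q_\ell}$-line associated to the $k$-subset $I\subseteq\{\psi_{1,\lambda},\ldots,\psi_{2d,\lambda}\}$, and the character through which $G_K$ acts on $H_I$ is $\prod_{\psi_{i,\lambda}\in I}\psi_{i,\lambda}^{-1}$; the inverse is precisely what is displayed in the statement.

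The main substantive point — and the only step that goes beyond formal linear algebra — is the identification at Frobenius. For a prime $v\nmid\ell N$ of good reduction, the Frobenius endomorphism of the reduction $A_v$ lies in the image of $F$ inside $\End(A_v)\otimes\Q$; this is the core output of the Shimura–Taniyama theory of complex multiplication and ensures that $\rho_\ell(\Frob_v)$ is an element of $F\otimes\Q_\ell$ acting on $V_\ell(A)$. Consequently its eigenvalue on $W_i$ is $\phi_i(\pi_v)\in F^\times\subset E_\lambda^\times$, which gives the required integrality statement $\psi_{i,\lambda}(\Frob_v)\in F^\times$. I expect this step to be the main obstacle: everything else is idempotent decomposition and multilinear algebra, while the fact that Frobenius is honestly given by an element of the CM field (rather than merely something in the $\ell$-adic algebra) is the deep input and is what I would cite from Shimura, Lang, or Pohlmann.
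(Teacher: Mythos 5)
The paper does not actually supply a proof of this statement: it is quoted directly from Pohlmann (\cite{Pohlmann}, Theorem $2'$, p.~171), so there is no ``paper's own proof'' to compare against. Your sketch is a correct reconstruction of the standard derivation of Pohlmann's decomposition from the main theorem of complex multiplication. The structural steps are all right: $H^k_\ell(\overline A)\cong\wedge^k H^1_\ell(\overline A)$; $V_\ell(A)$ is free of rank one over the \'etale algebra $F\otimes_\Q\Q_\ell$ by faithfulness plus the dimension count $2d=[F:\Q]$; the orthogonal idempotents of $F\otimes_\Q\overline{\Q_\ell}\cong\prod_\phi\overline{\Q_\ell}$ cut out $G_K$-stable lines $W_i$ because the $F$-action commutes with $G_K$; dualizing and taking $\wedge^k$ yields the lines $H_I$; and the Frobenius eigenvalues land in $F^\times$ because Shimura--Taniyama identifies $\rho_\ell(\mathrm{Frob}_v)$ with an element of $F\hookrightarrow\mathrm{End}(A_v)\otimes\Q$. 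You have also correctly tracked the sign: $G_K$ acts on $W_i$ by $\psi_{i,\lambda}$, hence on the dual line $W_i^*$ by $\psi_{i,\lambda}^{-1}$, hence on $H_I$ by $\prod_{i\in I}\psi_{i,\lambda}^{-1}$, which is exactly the $(\mathrm{Frob}_v)^{-1}$ normalization displayed in the theorem. One small point worth making explicit: since $E$ is the Galois closure of $F$, already $F\otimes_\Q E_\lambda\cong\prod_{\phi:F\hookrightarrow E}E_\lambda$, so the decomposition, the characters $\psi_{i,\lambda}$, and the eigenvalues are all defined over $E_\lambda$ (not merely over $\overline{\Q_\ell}$); this is what justifies the claim $\psi_{i,\lambda}:G_K\to E_\lambda^\times$ and is what the paper needs for Theorem~\ref{CMCoho-lambda-adic} and Corollary~\ref{lambdacompat}. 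Your last paragraph correctly isolates the one non-formal input --- Frobenius lying in the CM field --- as the substance that must be cited.
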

In fact, the Shimura-Taniyama theory implies that for \(  1 \leq i \leq 2d \), \( ( \psi_{i, \lambda} )_\lambda \) 
forms a compatible family of \( 1 \)-dimensional continuous \( \lambda \)-adic representations of \( G_K \), where \( \lambda \) runs over finite places of \( E \). We denote these compatible families by \( \psi_i \). 
Let \( \Psi := \{ \psi_1 , \ldots, \psi_{2d} \} \) and \( \Psi_\lambda := \{ \psi_{1, \lambda} , \ldots, \psi_{2d, \lambda} \} \). 

\begin{theorem}(With notations as above) \label{CMCoho-lambda-adic} 
Let \( \lambda \) be a prime of \( E \) lying over \( \ell \). Then,
\begin{equation}\label{family}
H^{2k}_\ell ( \overline{A} ) \otimes_{\Q_\ell }  E_\lambda =  \oplus_I H_{I, \lambda }  
\end{equation}
where \( I \subseteq \Psi_\lambda \) of size \( k \) and \( H_{I , \lambda} \) is a 1 dimensional \( G_K \) 
invariant \( E_\lambda \)-subspace such that for any finite place 
\( v \) of \( K \) away from \( \ell \) and places of bad reduction for \( A \),
\[
( Frob_v )^{-1}( x ) = ( \prod_{\psi_{i,\lambda} \in I } \psi_{i,\lambda} (Frob_v ) ) \cdot x
\]
for \( x \in H_{I, \lambda} \).
\end{theorem}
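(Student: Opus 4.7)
The plan is to derive this as a direct descent of Theorem \ref{ShTaCMCohoStruct} from the algebraic closure $\overline{\Q_\el}$ to the local field $E_\lambda$, exploiting the fact that each Hecke character $\psi_{i,\lambda}$ takes values already in $E_\lambda^\times$ and not merely in $\overline{\Q_\el}^\times$. I would set $V := H^{2k}_\el(\overline{A}) \otimes_{\Q_\el} E_\lambda$, viewed as a continuous $E_\lambda[G_K]$-module. After extension of scalars to $\overline{\Q_\el}$, Theorem \ref{ShTaCMCohoStruct} gives the decomposition $V \otimes_{E_\lambda} \overline{\Q_\el} = \oplus_I H_I$ into one-dimensional $G_K$-stable lines indexed by subsets $I \subseteq \Psi_\lambda$ of the appropriate size, on which $(\Frob_v)^{-1}$ acts by the scalar $\chi_I(\Frob_v)$ attached to the product character $\chi_I := \prod_{\psi_{i,\lambda} \in I} \psi_{i,\lambda}$.

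Next, since each $\psi_{i,\lambda}$, and hence each $\chi_I$, is a continuous character $G_K \to E_\lambda^\times$, one may perform the isotypic decomposition of $V$ already over $E_\lambda$. For each character $\chi$ that appears in the collection $\{\chi_I\}$, set
\[
V^\chi := \{ x \in V : g \cdot x = \chi(g)^{-1} x \text{ for all } g \in G_K \},
\]
which is an $E_\lambda$-subspace of $V$ defined without passing to $\overline{\Q_\el}$. A standard semisimplicity argument, or direct comparison of dimensions after tensoring with $\overline{\Q_\el}$, yields $V = \oplus_\chi V^\chi$ as $E_\lambda[G_K]$-modules, with $\dim_{E_\lambda} V^\chi$ equal to the number of subsets $I$ for which $\chi_I = \chi$. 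Inside each $V^\chi$ the Galois action is scalar, so choosing any $E_\lambda$-basis of $V^\chi$ and relabeling its elements by the subsets $I$ with $\chi_I = \chi$ produces the one-dimensional $E_\lambda$-subspaces $H_{I,\lambda}$ asserted in the statement. Chebotarev density together with the Shimura-Taniyama formula then transfers the Frobenius formula verbatim to $E_\lambda$-coefficients.

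The only real obstacle is bookkeeping: distinct subsets $I$ can give rise to the same character $\chi_I$, so the labelling of one-dimensional summands by subsets is not canonical and depends on a choice of basis inside each isotypic component. This is harmless for the statement, which asserts only the existence of such a decomposition. Beyond this point, the theorem is essentially an immediate descent of Theorem \ref{ShTaCMCohoStruct} from $\overline{\Q_\el}$ to the smaller coefficient field $E_\lambda$; all of the substantive input, namely the construction of the $\psi_{i,\lambda}$ and their $E_\lambda$-rationality, is already encoded in the Shimura-Taniyama theory.
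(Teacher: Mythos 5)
Your proposal is correct and is exactly the natural descent argument that the paper has in mind; the paper itself offers no explicit proof, treating the theorem as an immediate consequence of Theorem \ref{ShTaCMCohoStruct} combined with the $E_\lambda$-rationality of the characters $\psi_{i,\lambda}$. The only points worth tightening are (i) the Chebotarev/continuity step is needed already to pass from the Frobenius formula of Theorem \ref{ShTaCMCohoStruct} to the full $G_K$-action condition defining $V^\chi$, not merely at the end, and (ii) the claim $V^\chi\otimes_{E_\lambda}\overline{\Q_\ell}=W^\chi$ should be justified by noting that each $V^\chi$ is the kernel of a finite family of $E_\lambda$-linear maps and kernels commute with flat base change — but both are routine and your sketch already indicates the right mechanism.
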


We separate this as a corollary:

\begin{cor} \label{lambdacompat}
With notations and definitions as above, \( \{ H_{I, \lambda } \} \) is a strictly-compatible family of \( \lambda \)-adic representations of \( G_K \) as \( \lambda \) varies over finite places of \( E \). 
\end{cor}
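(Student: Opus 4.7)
The plan is to derive the corollary directly from Theorem \ref{CMCoho-lambda-adic} together with the fact, recalled immediately before it, that each $\psi_i = (\psi_{i,\lambda})_\lambda$ is itself a strictly compatible family of one-dimensional $\lambda$-adic characters of $G_K$ valued in $E$. The key observation is that compatibility is preserved under finite products and under the natural indexing of one-dimensional summands in \eqref{family}, so there is essentially nothing to prove beyond bookkeeping.

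First I would fix the indexing by identifying size-$k$ subsets of $\Psi$ with size-$k$ subsets of $\Psi_\lambda$ in the evident way, so that a single $I \subseteq \Psi$ determines a family of one-dimensional $E_\lambda$-representations $\{H_{I,\lambda}\}_\lambda$ as $\lambda$ ranges over finite places of $E$. Let $S_0$ be a finite set of places of $K$ containing all places of bad reduction for $A$ and large enough to serve as a common exceptional set for the finitely many compatible families $\psi_1, \ldots, \psi_{2d}$; such an $S_0$ exists because $\Psi$ is finite. By Theorem \ref{CMCoho-lambda-adic}, for any $v \notin S_0 \cup S_\lambda$ the $G_K$-representation on $H_{I,\lambda}$ is unramified at $v$, and the action of $\Frob_v^{-1}$ is multiplication by $\prod_{\psi_{i,\lambda} \in I}\psi_{i,\lambda}(\Frob_v)$.

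Next I would verify the two defining conditions of strict compatibility. Unramification away from $S_0 \cup S_\lambda$ is already established. Since $H_{I,\lambda}$ is one-dimensional, its Frobenius characteristic polynomial is
\[
P_{v,H_{I,\lambda}}(T) \;=\; T \;-\; \Bigl(\prod_{\psi_{i,\lambda}\in I}\psi_{i,\lambda}(\Frob_v)\Bigr)^{-1}.
\]
Each factor $\psi_{i,\lambda}(\Frob_v)$ lies in $F^\times \subseteq E^\times$ and, by strict compatibility of the family $\psi_i$, is independent of $\lambda$; therefore so is the product and its inverse. Hence $P_{v,H_{I,\lambda}}(T)$ has coefficients in $E$ (indeed in $F$) and is independent of $\lambda$, which is exactly the requirement for strict compatibility (in the natural $E$-valued extension of the definition recalled in Section \ref{lambda}, applied character by character).

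The only point that requires a little care is ensuring that a single finite exceptional set $S_0$ works simultaneously for all $2d$ characters $\psi_i$ and all $\binom{2d}{k}$ product characters indexed by subsets $I$; this is immediate from the finiteness of $\Psi$. Thus the corollary is a formal consequence of Theorem \ref{CMCoho-lambda-adic} and the compatibility of the Shimura--Taniyama characters, and no genuine obstacle arises.
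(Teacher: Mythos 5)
Your proposal is correct and follows exactly the route the paper has in mind: the corollary is stated without proof as an immediate consequence of Theorem \ref{CMCoho-lambda-adic} and the statement (just before it) that each $\psi_i = (\psi_{i,\lambda})_\lambda$ is a compatible family of characters, which is precisely what you use. Your remark that ``strict compatibility'' must here be read in the $E$-rational sense (coefficients in $E$, not $\Q$, since the $\psi_{i,\lambda}$ take values in $F^\times$) is a fair and careful clarification of a point the paper leaves implicit.
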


\section{The least prime that does not split completely}\label{least}

Let \( L \) and \( K \) be number fields with \( K \subseteq L \). In the proof of our results, it will be necessary to have an 
estimate for the norm of a prime \( v \) of \( K \) that does not split completely in \( L \). Such an estimate is 
given in  \cite{forum}, Theorem 1 assuming the 
Generalized Riemann Hypothesis for the Dedekind zeta function \( \zeta_L (s) \) of \( s \) 
and assuming that \( L/K \) is Galois. In the case that $K = \Q$, an unconditional estimate is given by X. Li in \cite{XLI}, Theorem 1
and also by Vaaler and Voloch in \cite{Vaaler-Voloch}. However, we need an (unconditional) estimate for a general \( K \). Such an estimate is remarked after the proof of Theorem 1 in 
\cite{forum}. What we prove is slightly weaker, though more than sufficient for our purposes. 
\medskip\par
Let us denote by  \( n_L , n_K \)  the degrees of \( L \) and \( K \) over \( \Q \) respectively. Let \( d_L , d_K \) denote the discriminants of \( L / \Q \) and \( K / \Q \), respectively. 
We also note the effective prime number theorem in $K$ as given by Lagarias and Odlyzko 
(see \cite{Lagarias-Odlyzko} and \cite{Serre-IHES}, Th\'eor\`eme 2). This theorem tells us that the number $\pi_K(x)$ of primes of $K$ of norm $ \leq x$ satisfies
\begin{equation}\label{pnt}
|\pi_K(x) -  \Li x + \Li(x^\beta)| \ \leq  c_1x\exp(-c_2\sqrt{\frac{\log x}{ n_K}}))
\end{equation}
provided $x \geq c_3n_K(\log |d_K|)^2$.
Here, $c_1, c_2, c_3  > 0$ are absolute and effective constants and $\beta$ is the possible exceptional zero of the Dedekind zeta function $\zeta_K(s)$ . If it exists, it is real and satisfies
$$
1 - \frac{1}{4\log |d_K|} < \beta < 1.
$$
Let us set
\begin{equation}\label{fofK}
f (K) = 
\begin{cases}
n_K^2 ~~~~~~~~~~~~~~~~~~~~~~~~~~~~~~~~~~~{\rm if} ~ \zeta_K (s) ~ {\rm\ does\ not\ have\ an\ exceptional\ zero} \\
\max(n_K!\log |d_K|, |d_K|^{1/n_K} ) + n_K^2  ~~~~~~~~~~~~{\rm otherwise.}
\end{cases}
\end{equation}
Thus, for example $f(\Q) = 1$.
\begin{theorem}\label{unconditional-forum}
There is an effective and absolute constant $c > 0$ with the following property. Let \( K \) be a number field and let \( L / K \) be a finite non-trivial Galois extension of degree $n$ . Then, there exists
a prime ideal \( \wp \) of \( K \) such that
\begin{itemize}
\item \( \wp \) is of degree 1 over \( \Q \) and unramified in \( L \). \\
\item \( \wp \) does not split completely in \( L \) and 
\begin{equation} \label{uncond-bound}
\N_{K / \Q } ~ \wp ~ < \ \max(55, e^{cf(K)}| d_L |^{5/2(n-1)} )
\end{equation}
\end{itemize}
\end{theorem}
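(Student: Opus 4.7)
The plan is to count the degree-one primes of $K$ (of norm $\leq x$) that do not split completely in $L$ using the effective Prime Number Theorem \eqref{pnt}, and to show this count is positive once $x$ exceeds the stated bound. Write $\pi_K^{(1)}(x)$ for the number of unramified primes of $K$ of degree one over $\Q$ with $\N \wp \leq x$, and $\pi_K^{\mathrm{sc}}(x)$ for the number of those that split completely in $L$. The key observation is that each prime of $K$ splitting completely in $L$ produces exactly $n$ primes of $L$ of the same norm, each of degree one over $\Q$, so
\[
n\cdot \pi_K^{\mathrm{sc}}(x)\ \leq\ \pi_L^{(1)}(x)\ \leq\ \pi_L(x).
\]
The number of ramified primes in $L/K$ is $O(\log|d_L|)$, and the number of degree-$\geq 2$ primes of $K$ of norm $\leq x$ is $O(n_K\sqrt{x})$. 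Hence the count we want, say $\pi_K^{\mathrm{ns}}(x)$, satisfies
\[
\pi_K^{\mathrm{ns}}(x)\ \geq\ \pi_K^{(1)}(x)\ -\ \tfrac{1}{n}\pi_L(x)\ -\ O(n_K\sqrt{x}+\log|d_L|).
\]

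Next, I would apply \eqref{pnt} to $K$ (as a lower bound on $\pi_K(x)$) and to $L$ (as an upper bound on $\pi_L(x)$). A crucial step is the treatment of exceptional zeros: if $\zeta_K$ has an exceptional zero $\beta_K$, then by Aramata--Brauer (the quotient $\zeta_L/\zeta_K$ is entire since $L/K$ is Galois) $\beta_K$ is also a zero of $\zeta_L$, and by uniqueness of the Siegel zero it equals $\beta_L$. Substituting, the contributions from exceptional zeros combine to give
\[
\pi_K^{\mathrm{ns}}(x)\ \geq\ \tfrac{n-1}{n}\bigl(\Li x\ -\ \Li x^{\beta_K}\bigr)\ -\ c_1 x\exp\Bigl(-c_2\sqrt{\tfrac{\log x}{n_L}}\Bigr)\ -\ O(n_K\sqrt{x}+\log|d_L|),
\]
with the $\beta_K$ term present only if an exceptional zero exists.

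It remains to choose $x$ making the right-hand side positive. To dominate the Lagarias--Odlyzko error term one needs $\sqrt{\log x/n_L}$ comparable to $\log n$ plus lower-order terms, which together with the Hermite--Minkowski estimate $n_L = n\, n_K \ll \log|d_L|$ forces $\log x$ to be of order $\log|d_L|$ times a factor absorbing the $n$ in the denominator; this is where the exponent $5/(2(n-1))$ on $|d_L|$ emerges. To dominate $\Li x^{\beta_K}$ by (a fraction of) $\Li x$, one needs $(1-\beta_K)\log x$ to be large; invoking Stark's and Tatuzawa's lower bounds for the exceptional zero — which give $1-\beta_K$ bounded below by the reciprocal of one of the quantities $n_K!\log|d_K|$ or $|d_K|^{1/n_K}$ — this is ensured by requiring $\log x\gg f(K)$ in the definition \eqref{fofK}. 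Checking also that the PNT applicability condition $x\geq c_3n_K(\log|d_K|)^2$ is absorbed in the $n_K^2$ summand of $f(K)$, one obtains that $x$ of size $\max(55,e^{cf(K)}|d_L|^{5/(2(n-1))})$ suffices.

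The main obstacle is the exceptional zero of $\zeta_K$: one needs both that it also occurs in $\zeta_L$ (so that the dominant terms of the form $\Li x-\Li x^{\beta}$ survive with the factor $(n-1)/n$) and a quantitative Siegel-type lower bound for $1-\beta_K$ to offset it. The $f(K)$ of \eqref{fofK} is engineered exactly to package both possibilities (Stark's bound $|d_K|^{1/n_K}$ and the $n_K!\log|d_K|$ form), while the residual $n_K^2$ handles the exceptional-zero-free case where only the smooth PNT error term remains. Once these are in hand, the bookkeeping of ramified primes and of primes of degree $\geq 2$ is routine.
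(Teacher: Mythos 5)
Your outline replaces the paper's weighted Mertens-type argument with a direct unweighted prime count, applying the Lagarias--Odlyzko effective PNT \eqref{pnt} to \emph{both} $K$ and $L$. The application to $L$ is where the argument breaks. The estimate \eqref{pnt} for $\pi_L(x)$ carries an error term $c_1x\exp(-c_2\sqrt{(\log x)/n_L})$ and an applicability threshold $x\geq c_3 n_L(\log|d_L|)^2$. For the error to be dominated by $\frac{n-1}{n}\Li x$ one needs, roughly, $\log x \gg n_L(\log\log x)^2$, and the threshold likewise requires $\log x \gg \log n_L + 2\log\log|d_L|$. Neither is implied by $\log x \sim cf(K) + \frac{5}{2(n-1)}\log|d_L|$ once $n$ is large: take $K=\Q$ and $L/\Q$ Galois of large degree $n$ with $\log|d_L|\asymp n\log n$, so the claimed threshold gives $\log x \asymp \log n + c$, while $n_L(\log\log x)^2 \asymp n(\log\log n)^2$ and $n_L(\log|d_L|)^2 \asymp n^3(\log n)^2$. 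So both the error domination and the applicability condition fail, and no amount of bookkeeping of ramified primes, degree-$\geq 2$ primes, or exceptional zeros can repair this. The point of the $5/(2(n-1))$ exponent is precisely that as $n$ grows the threshold on $x$ stays essentially bounded in terms of $f(K)$, which is incompatible with an error term that scales with $n_L$.

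The paper avoids this by never applying the effective PNT to $L$. Instead it works with the logarithmic derivative $-\zeta'_L/\zeta_L(\sigma)$ at $\sigma = 1 + (\log y)^{-1}$: the upper bound comes from Stark's Lemma 3 \eqref{stark}, an explicit complex-analytic inequality of the form $\frac{1}{\sigma-1}+\frac12\log|d_L|+\Gamma\text{-terms}$ that does not involve any zero-counting for $\zeta_L$ and has no dependence on $n_L$ beyond the $\Gamma$-factors (which are negative for $1<\sigma<5/4$). The lower bound is the Mertens-weighted sum $n\sum_{\N\wp\leq y}\log\N\wp/(\N\wp)^\sigma$ minus the $S_1,S_2$ corrections, and only here is the effective PNT invoked --- for $K$, not $L$. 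In the weighted sum the PNT error contributes only $O(n_K^2)$ (absorbed in $f(K)$) while the main term is $\log y$; the exceptional zero of $\zeta_K$ contributes at most $(1-\beta)^{-1}\leq c_K$, again packaged into $f(K)$. There is thus no need to compare $\beta_K$ with a possible exceptional zero of $\zeta_L$, which your Aramata--Brauer step handles only informally (note that $\log|d_L|\geq\log|d_K|$ means the exceptional region for $\zeta_L$ is \emph{narrower}, so $\beta_K$ being exceptional for $\zeta_K$ does not automatically place it in the exceptional region for $\zeta_L$). In short: a direct count forces you to control $\pi_L(x)$ with LO-type errors that scale with $n_L$; the weighted sum plus Stark's lemma converts the $L$-side into a discriminant bound, which is exactly the $|d_L|^{5/(2(n-1))}$ shape of the result.
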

%%%%%%%%%%%%%%%%%%%%%%%%%%%%%%%%%%%%%%%%%%%%%%%%%%%
%%%%%%%%%%%%%%%%%%%%%%%%%%%%%%%%%%%%%%%%%%%%%%%%%%%
\begin{proof} 
From \cite{Stark}, Lemma 3, we have for $\sigma > 1$,  the inequality
\begin{equation}\label{stark}
-\frac{\zeta^\prime_L}{\zeta_L}(\sigma) < \frac{1}{\sigma} + \frac{1}{\sigma-1} + \half\log\left( \frac{|d_L|}{2^{2r_2}\pi^{n_L}}\right)
+ \frac{r_1}{2}\frac{\Gamma^\prime}{\Gamma}(\sigma/2) + r_2\frac{\Gamma^\prime}{\Gamma}(\sigma).
\end{equation}
On the other hand, we have the Dirichlet series expansion
$$
-\frac{\zeta^\prime_L}{\zeta_L}(\sigma) \ =\  \sum_{\mathfrak p, m} \frac{\log \N\mathfrak p}{(\N\mathfrak p)^{m\sigma}}
$$
where the sum ranges over primes $\mathfrak p$ of $L$ and $1 \leq m \in \Z$. 
If all primes $\wp$ of $K$ which are of degree $1$ over $\Q$  and of norm less than $y$ (say) ramify or split completely in $L$, then we see that with $n = [L:K] = n_L / n_K $, 
$$
-\frac{\zeta^\prime_L}{\zeta_L}(\sigma) \geq n\sum_{\N\wp \leq y}\!\!{}^\prime\  \frac{\log \N\wp}{(\N\wp)^\sigma}
$$
where the prime on the summation indicates that we range over primes $\wp$ of $K$ that are of degree $1$ over  $\Q$ and which are unramified over $L$. 
Thus, 
$$
-\frac{\zeta^\prime_L}{\zeta_L}(\sigma) \geq n\sum_{\N\wp \leq y}\  \frac{\log \N\wp}{(\N\wp)^\sigma}\ -\ nS_1 - nS_2
$$
where
$$
S_1 = \sum_{\N\wp \leq y} \!\!{}^r \ \ \frac{\log \N\wp}{(\N\wp)^\sigma}
$$
and  the sum is taken over primes $\wp$ of $K$ that ramify in $L$, and
$$
S_2  =  \sum_{ \stackrel{ \N\wp \leq y}{\wp {\rm\ of\ degree\ } \geq 2 } } \ \frac{\log \N\wp}{(\N\wp)^\sigma}.
$$
For each prime $\wp$ counted in this sum, let $p$ be the rational prime that it divides. Then $(\log \N\wp)/(\N\wp)^\sigma\ \leq 2(\log p)/p^{2\sigma}$. Moreover, given a rational prime $p$, there are at most $n_K$ primes $\wp$ of $K$ dividing $p$ and so
$$
S_2 \ \leq\ 2 n_K\sum_p \frac{\log p}{p^{2\sigma}}\ \leq \ 2n_K.
$$
We are using here the fact that for the Riemann zeta function, the inequality \eqref{stark} gives 
$$
\sum_p \frac{\log p}{p^{2\sigma}} \ <\ -\frac{\zeta^\prime}{\zeta}(2\sigma) \ <\ 1
$$
for $\sigma > 1$. 
For $S_1$, we have (\cite{forum}, p. 558)
$$
S_1 \ \leq\ \sum_{\wp|d_{L/K}} \log \N\wp \ \leq \frac{2}{n}\log d_L.
$$
Thus, 
\begin{equation}\label{inequality}
-\frac{\zeta^\prime_L}{\zeta_L}(\sigma) \geq n\sum_{\N\wp \leq y}\  \frac{\log \N\wp}{(\N\wp)^\sigma} - 2\log d_L - 2n_L.
\end{equation}
To estimate the sum on the right,  we use \eqref{pnt}.
Let us assume that $y \geq y_0 = c_3n_K(\log |d_K|)^2$.
By partial summation, we have
\begin{equation}\label{partialsummation}
\sum_{\N\wp \leq y}\  \frac{\log \N\wp}{(\N\wp)^\sigma}\ =
\pi_K(y)(\log y)y^{-\sigma}\ +\ \int_1^y \pi_K(t)t^{-1-\sigma}(\sigma\log t + 1) dt.
\end{equation}
The first term on the right is bounded by using the  estimate
$$
\pi_K(x) \ \leq\ n_K\pi(x).
$$
Thus, we see that as $\sigma > 1$, 
$$
\pi_K(y)(\log y)y^{-\sigma} \ \leq\ c_4 n_K.
 $$
Similarly, we see that
$$
\int_1^{y_0}   \pi_K(t)t^{-1-\sigma}(\sigma\log t + 1) dt \ \leq
\ 2c_4n_K\log y_0
$$
The estimate \eqref{pnt} implies that replacing $\pi_K(t)$ with $\Li(x) - \Li(x^\beta)$ in the integral in 
\eqref{partialsummation} results in an error  of at most  
$$
\leq 2\int_{1}^{\log y} \exp(-c_1\sqrt{u/n_K}) \ u du\ \leq  c_5\ n_K^2.
$$
The term coming from the possible exceptional zero contributes to the integral an amount which is easily seen to be
\begin{equation}\label{exceptional-estimate}
<  (\sigma-\beta)^{-1} \ < (1-\beta)^{-1}.
\end{equation}
To estimate this,
we note that by \cite{Stark}, equations (27) and (28), we have
$$
\beta < \max(1 -  (4n_K!\log |d_K|)^{-1}, 1 - (c_6|d_K|^{1/n_K})^{-1}).
$$
This implies that
$$
(1-\beta)^{-1}\ <\ \max(4n_K!\log |d_K|, c_6|d_K|^{1/n_K} ).
$$
Let us denote the right hand side by $c_K$. 
Finally, it remains to estimate
$$
\int_{y_0}^y (\Li t) t^{-1-\sigma}(\sigma\log t + 1) dt.
$$
and this is easily seen to be
$$
\sigma \int_1^y t^{-\sigma}dt  + O(\int_1^y t^{-\sigma}(\log t)^{-1}) dt.
$$
Taking 
\begin{equation}\label{sigma}
\sigma = 1 + (\log y)^{-1}, 
\end{equation}
 this is easily seen to be
$$
\geq\ \log y - c_7\log\log y.
$$
Putting all this together into \eqref{inequality}, we deduce that 
$$
-\frac{\zeta^\prime_L}{\zeta_L}(\sigma) \geq n\log y  - n(c_K + c_5 (\log |d_K|)^2 + c_7\log\log y) - c_8n_L\log y_0 - 2\log d_L - 2n_L.
$$
On the other hand, we can combine this with the upper bound given above \eqref{stark}. As mentioned in \cite{Stark}, p. 142, we have $\Gamma^\prime(\sigma/2) < 0$ and $\Gamma^\prime(\sigma) < 0$ for $1 < \sigma < 5/4$. Moreover, $\Gamma(x) > 0$ for $x > 0$ real. Hence, we deduce that provided $y > e^4 \sim 54.6$, there is an absolute 
and effective constant $c_9>0$ such that
$$
n\log y <  1 + \log y + \frac{5}{2}\log |d_L|  + c_9nf(K)
$$
with $f(K)$ given by \eqref{fofK}.
It follows that there is a $c > 0$ such that
$$
\log y \ \leq\  cf(K) +  \frac{5}{2(n-1)}\log |d_L|.
$$
Thus, if $y > e^{cf(K)}|d_L|^{5/2(n-1)}$, we get a contradiction and this
 proves the result.
\end{proof}

%%%%%%%%%%%%%%%%%%%%%%%%%%%%%%%%%%%%%%%%%%%%%%%%%%%
%%%%%%%%%%%%%%%%%%%%%%%%%%%%%%%%%%%%%%%%%%%%%%%%%%%

%%%%%%%%%%%%%
%\begin{comment}
%In particular, he proves that for \( K = \Q \) and for any \( \epsilon > 0 \), there is an absolute constant \( %c_\epsilon > 0 \) and a prime \( \ell \) that does not split completely in \( L \) with 
%\begin{equation}
%\ell \leq c_\epsilon  d_L^{\frac{1+ \epsilon}{4 c_L ( n_L -1)} }.
%\end{equation}
%where \( n_L = [ L : \Q ] \) and \( d_L \) is the absolute value of the discriminant of \( L / \Q \).
%Here 
%\begin{equation}
%c_L = \sup_{\lambda \geq 0} \frac{1 - \frac{n_L}{n_L - 1} e^{-\lambda} }{ \lambda } 
%\end{equation}
%and
%\( c_L = 1 + O ( n_L^{-1/2} ) \). 
%\end{comment}
%%%%%%%%%%%%%%%%%%%%%%%%%%

%\begin{theorem} \cite[X. Li]{XLI} 
%Let \( K \) be a number field of degree \( d \) and discriminant \( \Delta_K \). Let
%\( \ell \) be the least prime that does not split completely in K. Then
%\[
%\ell < \!\! <_\epsilon  \Delta_K^{\frac{1+ \epsilon}{4 A ( d -1)} }.
%\]
%Here \( A = \sup_{\lambda \geq 0} \frac{1 - \frac{d}{d-1} e^{-\lambda} }{ \lambda } \) satisfies 
%\( A \geq 1 - \sqrt{ \frac{2}{d-1} } = 1 + O ( \frac{1}{\sqrt{d} } ) \rightarrow 1 \) as \( d \rightarrow \infty %\). 
%\end{theorem}

We record here the following estimate of Hensel (see for example, pp. 44-45 of \cite{RMKM}) which we shall also need. We have
\begin{equation}\label{Hensel-Estimate}
\log d_L \leq ( n_L - 1 ) \sum_{p \in P ( L/ \Q) } \log p  + n_L ( \log n_L ) | P ( L / \Q ) | 
\end{equation}
where \( P ( L / \Q ) \) is the set of rational primes \( p \) that ramify in \( L \). In fact, 
when \( L \) is Galois over \( K \), the following stronger estimate holds:
\begin{equation}\label{Hensel-Estimate-Galois}
\log d_L \leq ( n_L - n_K ) \sum_{p \in P ( L/ K) } \log p  + n_L ( \log n_L - \log n_K  ) + \frac{n_L}{n_K}\log d_K.
\end{equation}
\medskip\par

In particular, if \( L / K \) is a Galois extension unramified outside of primes dividing \( M \) then 
\begin{equation}\label{disc-bound}
\log d_L \ll  n_L \log ( M\frac{n_L}{n_K} ) + \frac{n_L}{n_K}\log d_K.
\end{equation}

\section{Proof of the Main Lemma}\label{MainLemma}
\medskip\par
\begin{lem}
Suppose \( \omega \in H^{\ast}_l ( \overline{A} ) \) is a (simultaneous) eigenvector for a conjugacy set \( C \subset G_K \). Then so is every element of \( \Q_l [ G_K ] (\omega ) \), the \( G_K \)-module generated by \( \omega \) inside \( H^{\ast}_l ( \overline{A} ) \).
\end{lem}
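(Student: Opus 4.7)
The plan is to combine the hypothesis with the structural decomposition of cohomology supplied by Theorem~\ref{ShTaCMCohoStruct}. After extending scalars to $\overline{\Q_\el}$, we have
\[
H^{\ast}_\el(\overline{A}) \otimes_{\Q_\el} \overline{\Q_\el}\ =\ \bigoplus_I H_I,
\]
where each $H_I$ is one-dimensional and $G_K$-stable, with $G_K$ acting through a character $\chi_I$. In particular the image of $G_K$ in $\Aut(H^{\ast}_\el(\overline{A}))$ consists of simultaneously diagonalizable operators, and is therefore commutative; commutativity over $\overline{\Q_\el}$ descends to commutativity over $\Q_\el$ since $\Q_\el\hookrightarrow\overline{\Q_\el}$.

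Once commutativity is available the argument is essentially one line. Fix $c\in C$ with $c(\omega)=\lambda_c\omega$. For any $g\in G_K$, the operators $c$ and $g$ commute on $H^{\ast}_\el(\overline{A})$, so
\[
c(g(\omega))\ =\ g(c(\omega))\ =\ \lambda_c\, g(\omega).
\]
By $\Q_\el$-linearity the same identity holds with $g$ replaced by any $r=\sum a_i g_i \in \Q_\el[G_K]$. Running this for each $c \in C$ shows that every element of $\Q_\el[G_K](\omega)$ is a simultaneous eigenvector for $C$, with exactly the same family of eigenvalues $\{\lambda_c\}_{c\in C}$ as $\omega$ itself.

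I expect no serious obstacle: the genuine content is the abelianness of the Galois action on cohomology, which is special to the CM setting, while the conjugacy hypothesis on $C$ is natural (in the applications $C$ will be a Frobenius conjugacy class) but plays no role in the argument itself. The only delicate point is the passage between $\Q_\el$ and $\overline{\Q_\el}$ when invoking Shimura--Taniyama, which the commutativity viewpoint absorbs cleanly by keeping $\omega$ and the operators on the $\Q_\el$-level throughout.
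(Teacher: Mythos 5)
Your proof is correct in the CM setting where the lemma is applied, but it takes a genuinely different (and more specialized) route from the paper's. The paper's argument is a one-liner that uses \emph{only} the conjugacy-set hypothesis: for $g \in G_K$ and $\sigma \in C$,
\[
\sigma(g\omega) \ =\ g\bigl((g^{-1}\sigma g)(\omega)\bigr) \ =\ g(\lambda_\tau \omega) \ =\ \lambda_\tau\, g(\omega),
\]
where $\tau = g^{-1}\sigma g \in C$ because $C$ is closed under conjugation. This proves the lemma for an \emph{arbitrary} Galois representation, with no CM hypothesis and no appeal to Shimura--Taniyama; the conjugacy-set assumption is exactly what makes it work, and is far from decorative. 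Your proof instead imports commutativity of the $G_K$-action from the Shimura--Taniyama decomposition, after which $\sigma$ and $g$ commute and the conjugacy hypothesis becomes redundant. That is a valid argument, and it has the pleasant byproduct that the eigenvalue is preserved, $c(g\omega) = \lambda_c\, g\omega$ (which is true here precisely because abelianness makes $\lambda_\sigma = \lambda_{g^{-1}\sigma g}$, but is \emph{not} part of the general lemma and fails without commutativity --- the paper's version produces the eigenvalue $\lambda_{g^{-1}\sigma g}$). The trade-off: your proof buys a slightly stronger conclusion (fixed eigenvalue) at the cost of generality and of invoking a substantial structural theorem to settle something that a single conjugation computation already handles, while the paper's proof stays elementary, requires no hypothesis on $A$, and makes the role of the conjugacy-set condition transparent.
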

\proof  We need to show that for any \( g \in G_K \), \( g ( \omega ) \) is an eigenvector for any \( \sigma \in C \). We have
\begin{eqnarray*}
\sigma ( g ( \omega ) ) & = & g ( (g^{-1} \sigma g ) ( \omega ) ) \\
& = & g ( \tau ( \omega ) ), {\rm for~some~} \tau \in C {\rm ~as~} C {\rm ~is~a~conjugacy~set} \\
& = & g ( \lambda_\tau \cdot \omega ) ~{\rm for~some~} \lambda_\tau \in \Q_\ell ~{\rm by~assumption} \\
& = & \lambda_\tau \cdot g ( \omega ),
\end{eqnarray*}
the last step follows from the \( \Q_\ell \) linearity of the \( G_K \) action.
\qed
\medskip\par
For integers $N, m, d \geq 1$ and a number field \( K \), let us set
\begin{equation}\label{lem-bound}
B = B(N,K,m,d) = e^{f(K)}N^{mn_Kd^2}(f(K) + n_K\log N)^{mn_Kd^2+1}.
\end{equation}
Here 
\begin{equation}
f (K) = 
\begin{cases}
n_K^2 ~~~~~~~~~~~~~~~~~~~~~~~~~~~~~~~~~~~{\rm if} ~ \zeta_K (s) ~ {\rm\ does\ not\ have\ an\ exceptional\ zero} \\
\max(n_K!\log |d_K|, |d_K|^{1/n_K} ) + n_K^2  ~~~~~~~~~~~~{\rm otherwise.}
\end{cases}
\end{equation}
as defined by \eqref{fofK},  and $n_K = [K:\Q]$.
\medskip\par
Let \( G_K \) denote the absolute Galois group of a number field \( K \). 
Let \( E \) be some number field and \( \mathcal{O}_E \), the ring of integers of \( E \). For a finite place \( \lambda \) of \( E \), we denote by \( E_\lambda \), the completion of 
\( E \) at \( \lambda \), and by \( \mathcal{O}_\lambda \), the ring of integers of \( E_\lambda \). Let $\{ M_\lambda \}$ be a 
family of continuous \( \mathcal{O}_\lambda [ G_K ] \)-modules such that 
\( \{ M_\lambda \otimes_{\mathcal{O}_\lambda} E_\lambda \} \) 
is a strictly compatible family of continuous semi-simple 
$ \lambda $-adic integral representations of \( G_K \) 
of weight $2w$, conductor $N$ and dimension \( d \), where \( \lambda \) varies over all finite places of \( E \). 
Let \( S^{\prime} \) be the set of exceptional places of \( K \) for the system  \( \{ M_\lambda \} \).
\begin{lem} {\bf {Main Lemma}} \label{Forum}  There are absolute and effective constants $c_1, c > 1$  so that if for some finite place \( \lambda_0 \) of \( E \), the set 
\[
S  \ = \ \{ Frob_v: ~ \N v \leq c_1 B(Nwd^2\log d_E,K,n_E,d)^c \}
\]
acts as scalars on $ M_{\lambda_0} $, then $G_K$ acts as scalars on $M_{\lambda_0}$ and
hence on any \( M_\lambda \) where \( \lambda \) is a finite place of \( E \).
\end{lem}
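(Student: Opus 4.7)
The plan is to reduce the Main Lemma to an application of Theorem~\ref{unconditional-forum}. Let $\bar\rho_{\lambda_0}:G_K\to\PGL(M_{\lambda_0})$ be the projective representation attached to $\rho_{\lambda_0}$; its kernel is exactly the set of $g\in G_K$ such that $\rho_{\lambda_0}(g)$ is a scalar operator. Set $L=\bar K^{\ker\bar\rho_{\lambda_0}}$, a Galois extension of $K$. The desired conclusion ``$G_K$ acts as scalars on $M_{\lambda_0}$'' is precisely the statement $L=K$. By hypothesis, $\Frob_v\in\ker\bar\rho_{\lambda_0}$ for every $v\in S$, so every $v\in S$ splits completely in $L$. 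My aim is therefore to show that if $L\ne K$, Theorem~\ref{unconditional-forum} produces a prime $v$ of $K$ with $\N v\leq c_1 B(Nwd^2\log d_E,K,n_E,d)^c$ that does not split completely in $L$, a contradiction.

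To apply Theorem~\ref{unconditional-forum} I need $L/K$ to be finite together with effective bounds on $[L:K]$ and on $d_L$. For this I introduce the adjoint representation $\mathrm{Ad}_\lambda:G_K\to\GL(\mathrm{End}_0 M_\lambda)$ given by conjugation, whose kernel agrees with that of $\bar\rho_\lambda$. Because $\{\rho_\lambda\}$ is strictly compatible, integral and semisimple of weight $2w$, and because $\mathrm{End}_0$ is a direct summand of $\rho_\lambda\otimes\rho_\lambda^\vee$, the family $\{\mathrm{Ad}_\lambda\}$ is strictly compatible, integral, semisimple (in characteristic $0$), and of weight $0$. Hence the characteristic polynomial of $\mathrm{Ad}_{\lambda_0}(\Frob_v)$ lies in $\Z[T]$, is monic of degree $d^2-1$, and has all complex roots of absolute value $1$; by Kronecker's theorem these roots are roots of unity of degree over $\Q$ at most $d^2-1$, so each $\mathrm{Ad}_{\lambda_0}(\Frob_v)$ has order dividing an integer $e_0=e_0(d)$ depending only on $d$. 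Frobenii are dense in $G_K$ by Chebotarev, so every element of $\mathrm{Im}(\mathrm{Ad}_{\lambda_0})$ is torsion of order dividing $e_0$. Since a compact $\lambda$-adic analytic subgroup of $\GL_{n_E(d^2-1)}(\Q_\ell)$ of bounded exponent must be finite (a uniform pro-$p$ open subgroup would be simultaneously torsion-free and of bounded exponent, hence trivial), $\mathrm{Im}(\mathrm{Ad}_{\lambda_0})$ is finite, and a Jordan--Schur argument yields an explicit bound $n:=[L:K]\leq \mathcal M(d,n_E)$. Moreover, semisimple $\lambda$-adic representations with the same trace character have the same kernel, so the compatibility of $\{\mathrm{Ad}_\lambda\}$ implies $L$ is independent of $\lambda_0$; varying $\lambda$ across residue characteristics shows $L/K$ is unramified outside primes of $K$ dividing $N$. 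Plugging this into \eqref{disc-bound} yields $\log d_L\ll n\cdot n_K\log(Nn)+n\log d_K$.

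Inserting the discriminant estimate into Theorem~\ref{unconditional-forum} gives, for $n\geq 2$, a prime of $K$ not splitting completely in $L$ with norm at most $\max(55,e^{cf(K)}(Nn)^{5n_K}d_K^{5})$. One then checks this is dominated by $c_1 B(Nwd^2\log d_E,K,n_E,d)^c$ for $c_1,c$ sufficiently large: the exponent $n_E n_K d^2$ inside $B$ absorbs both $5 n_K\log\mathcal M(d,n_E)$ and $5n_K\log N$, while the factor $e^{f(K)}$ in $B$ easily absorbs $e^{cf(K)}$ and $d_K^{5}$. This contradicts the assumption that $S$ contains every Frobenius of norm $\leq c_1 B^c$, forcing $L=K$, hence $\rho_{\lambda_0}(G_K)$ consists of scalars. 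The extension to general $\lambda$ is immediate: if $\rho_{\lambda_0}(g)=\chi(g)\cdot\mathrm{id}$ for all $g$, then strict compatibility forces $\rho_\lambda(\Frob_v)$ to have characteristic polynomial $(T-\chi(\Frob_v))^d$; semisimplicity upgrades this to $\rho_\lambda(\Frob_v)$ being scalar, which extends to all of $G_K$ by Chebotarev and continuity.

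The main obstacle is the quantitative matching between the output of Theorem~\ref{unconditional-forum} and the prescribed form $c_1 B(Nwd^2\log d_E,K,n_E,d)^c$. The Jordan--Schur bound $\mathcal M(d,n_E)$ can be super-exponential in $d$, and one must verify carefully that the exponent $n_E n_K d^2$ together with the additional factors $w$ and $\log d_E$ inside $B$ really suffice to absorb it. A secondary technical point is confirming that the extension $L$ truly is independent of $\lambda_0$ and ramified only at primes above $N$, which relies on applying the isomorphism of semisimple $\lambda$-adic representations with equal trace characters to the compatible family $\{\mathrm{Ad}_\lambda\}$.
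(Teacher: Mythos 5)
The proposal diverges from the paper's proof at a crucial structural point, and unfortunately that divergence introduces a genuine gap. You work with the projective representation $\bar\rho_{\lambda_0}:G_K\to\PGL(M_{\lambda_0})$ over the $\lambda_0$-adic \emph{field} and try to prove its image is finite via the adjoint representation and Kronecker's theorem. The paper does something quite different: it picks an auxiliary prime $\lambda$ (depending on the smallest $v$ for which $\Frob_v$ is non-scalar, chosen so that the distinct eigenvalues of $\Frob_v$ stay distinct mod $\lambda$), reduces $M_\lambda$ modulo $\lambda$, and works with the \emph{finite} quotient $\PGL(\overline{M_\lambda})$. Finiteness of the relevant extension $L/K$ is then automatic, and a bootstrap on the least offending $v$ closes the argument. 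Your route tries to get finiteness over the $\lambda_0$-adic field itself, which is exactly what reducing mod $\lambda$ is designed to avoid.

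The gap is in the Kronecker step. The eigenvalues of $\mathrm{Ad}_{\lambda_0}(\Frob_v)$ are the ratios $\alpha_i/\alpha_j$ of the Frobenius eigenvalues of $\rho_{\lambda_0}(\Frob_v)$. These all have archimedean absolute value $1$ under every embedding, but they are \emph{not} algebraic integers in general, so Kronecker's theorem does not apply and they need not be roots of unity. Concretely, for an ordinary elliptic curve over $\mathbb F_p$ with Frobenius eigenvalues $\alpha,\bar\alpha$ and $\alpha\bar\alpha=p$, the ratio $\alpha/\bar\alpha=\alpha^2/p$ fails to be an algebraic integer (since $\alpha$ generates a prime of norm $p$), and it is typically of infinite multiplicative order. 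The same phenomenon occurs in the CM situation relevant to the paper: on a sum $M_\lambda=\bigoplus_{I}H_{I,\lambda}$ of two or more distinct Hecke-character lines, the ratio characters $\psi_I/\psi_J$ are generally of infinite order, so the projective image of $G_K$ on $M_{\lambda_0}$ is \emph{infinite}. Thus the extension $L=\bar K^{\ker\bar\rho_{\lambda_0}}$ you define is an infinite extension of $K$, Theorem~\ref{unconditional-forum} cannot be applied to it, and the subsequent bound on $[L:K]$, the Jordan--Schur step, and the discriminant estimate all collapse. So even before the quantitative issues you flag (whether the Jordan constant fits into $B$), the argument does not establish finiteness of $L/K$, which is the essential prerequisite for invoking Theorem~\ref{unconditional-forum}. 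You should adopt the paper's device of passing to $\overline{M_\lambda}=M_\lambda/\lambda M_\lambda$ for a well-chosen auxiliary $\lambda$: this manufactures a genuinely finite projective quotient whose order is explicitly controlled by $\ell$ and $d$, after which the bootstrap on the least bad prime $v$ closes the argument.
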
 
\pf By abuse of notation, below, we will use \( M_\lambda \) to denote \( M_\lambda \) as a module over \( \mathcal{O}_E \) and as well as the associated vector 
space \( M_\lambda \otimes_{\mathcal{O}_E } E_\lambda \) over \( E_\lambda \). 

First suppose that \( Frob_v \) acts as a scalar \( \mu \) on \( M_\lambda \). Then
\( ( \N v )^{wd } = \mu^d \), since the characteristic polynomial of \( Frob_v \) has coefficients in \( \Z \). So in fact, \( 
\mu = \epsilon ( \N v )^w \) with \( \epsilon^d = 1 \). Since \( \{ \rho_\lambda \} \) is a compatible family of rational 
integral representations, the trace \( d \epsilon ( \N v )^w \) of \( Frob_v \), lies in $\Z$ and so \( \epsilon = \pm 1 \). 
 
If \( d = 1 \) then there is nothing to prove, so we may assume without loss that \( d > 1 \). Suppose that $G_K$ does 
not act as scalars on \( M_{\lambda_0} \). By compatibility, \( G_K \) does not act as scalars on the family \
$\{ M_\lambda \}$. We shall show that \( S \) does not act as scalars either. 

Let \( S^{\prime} \) be the exceptional set (of places of \( K \)) for the system \( \{ M_\lambda \} \). For a finite 
place \( \lambda \) of \( E \), let \( \ell_\lambda \) denote the prime of \( \Q \) that lies below \( \lambda \). Let 
\( S_\lambda := \{ v | ~ v ~ \mathrm{lies~over}~ \ell_\lambda  \} \), the set of finite places of \( K \) that divide 
\( \ell_\lambda \).  For \( v \notin S^{\prime} \cup S_\lambda \), let 
\( P_{v, \lambda} (T) := \det ( T - \rho_{\lambda} ( Frob_v ) ) \). By compatibility, \( P_{v, \lambda} (T) \) is independent of \( \lambda \).

Let \( v \notin S^{\prime} \) be the prime with least norm such that \( Frob_v \) does not act as scalars on $M_{\lambda_0}$. (Note that \( Frob_v \) is unique up to conjugation.) Since \( \rho_\lambda \) is a semi-simple representation, this is equivalent to \( P_v (T) \neq ( T - \theta_v )^d \) for any \( \theta_v \in \C \). Thus by compatibility, \( Frob_v \) does not act as a scalar on \( M_\lambda \) for any \( \lambda \).
\medskip\par
Denote the eigenvalues of \( Frob_v \) by  \( \{ \alpha_{i,v} \} \) for \( i = 1 \) to \( d \). Let 
us choose \( \ell \) unramified in \( E \)
so that the distinct eigenvalues of \( Frob_v \) remain distinct modulo \( \ell \). This can be 
done by choosing an \( \ell \) that does not divide the discriminant of \( E \) and 
the norm of the product of the differences of any two distinct eigenvalues of \( Frob_v \). We have
\[
\left| \prod (\alpha_{i,v} - \alpha_{j,v})\right | \ \leq (2(\N v)^w)^{d^2}
\]
where 
%$d$ is the dimension of $M_\lambda $ and
the product is over pairs of distinct eigenvalues of \( Frob_v \). Thus, we can find such an $\ell$ satisfying
\begin{equation}\label{boundonell}
\ell\ \ll \log \{  d_E(2(\N v)^w)^{d^2} \}  \ \ll\ wd^2\log \N v + \log d_E,
\end{equation}
where the implied constant is absoulte. 
\medskip\par
Here, we are using the fact that for an integer $m > 1$, there exists a  prime which is $O(\log m)$ that does not divide $m$. Indeed, by the prime number theorem,  the product of all the primes less than $3\log m$ (say) would be larger than $m$. Hence, at least one of these primes does not divide $m$.
\medskip\par
It then follows that for at least one of the places \( \lambda \) of \( E \) that lie over \( \ell \), the distinct eigenvalues of \( Frob_v \) remain distinct modulo \( \lambda \). Thus, $ Frob_v $ does not act as a scalar on $\overline{M_\lambda} \:=\ M_\lambda /\lambda M_\lambda $. 
\medskip\par
Let \( \overline{ \rho_\lambda } \) be the natural map from
\( G_K \) to \( PGL( \overline{ M_\lambda } ) \), the projective general linear group associated to
\( \overline{ M_\lambda } \). By above, the image of \( Frob_v \) to
\( PGL( \overline{ M_\lambda } ) \) is not the identity. By applying
%% Theorem 1 of \cite{XLI} (or 
Theorem \ref{unconditional-forum} to this representation, it follows that there exists a prime $v^\prime$ (say) 
for which the image of $ Frob_{v^\prime} $ in \( PGL( \overline{ M_\lambda ) } \) is not the identity. In particular, 
\( v^{\prime} \) does not split completely in the fixed field \( L \) (say) of the kernel of 
\( \overline{ \rho_\lambda } \). We know that \( L \) is unramified outside \( \ell N \) and as 
$$
\log \frac{n_L}{n_K} \ \leq \ [E:\Q]d^2\log \ell
$$
it follows by \eqref{disc-bound} of section \ref{least},  
\begin{eqnarray}\label{Hensel-Applied}
%\log d_L & \leq & ( n_L - 1 ) \sum_{p \in P ( L/ \Q) } \log p  + n_L \log n_L \\  
\log d_L & \ll & n_L[E:\Q]d^2 \log N \ell + \frac{n_L}{n_K}\log d_K .
%%\frac{1}{n_L} \log d_L \ll \log N \ell . 
\end{eqnarray}
Then by (\ref{uncond-bound}) of section \ref{least} and \( n = n_L /n_K \geq 2 \), we can choose $v^\prime$ satisfying
\begin{equation}
\log N v^\prime \ll f(K) + \frac{n_K}{n_L} \log d_L \ll f(K) + n_K[E:\Q]d^2\log N \ell .
\end{equation}
By \eqref{boundonell}, we have
\begin{equation}
\log N v^\prime \ll f(K) + n_K[E:\Q]d^2\log (Nwd^2\log \N v +N\log d_E)  .
\end{equation}
Since \(  Frob_{v^\prime} \) does not act as a scalar on $\overline{M_\lambda}$, it does not act as a scalar on 
$M_\lambda$, and hence on \( M_{\lambdanot} \) either, and as $v$ is a prime of least norm with this property, we have an  absolute and effective constant 
$c > 0$ with
\begin{equation} 
\N v\ \leq \ \N{v^\prime}\ \leq\ \left(e^{f(K)}\{N(wd^2\log \N v + \log d_E\}^{n_K[E:\Q]d^2}\right)^c.
\end{equation}
Hence, we have proved that $\N v$ is bounded by
\begin{equation}
 \ll  (e^{f(K)}(Nwd^2\log d_E)^{n_Kn_Ed^2 })^c(f(K) + n_Kn_Ed^2\log (Nwd^2\log d_E))^{cn_Kn_Ed^2+1}.
\end{equation}
Here the implied constant is absolute and effective.
Thus,
$$
\N v \ \leq c_1B(Nwd^2\log d_E,K,n_E,d)^c.
$$
This contradicts our assumption and proves the lemma. 
  \qed
\medskip\par

%%%%%%%%%%%%%%%%%%%%%%%FIRST Proof of Finite-bound-Theorem in CM Case %%%%%%%%%%%%%%%%%%%%%%%%%%%%%%%
\section{Proof of Theorem (\ref{finitebound})}\label{Theorem1.1}
\medskip\par
By assumption, $ \omega \in H^{2m}_\ellnot (\overline{A}) $. Let \( M_\ellnot \) be the $\Q_\ellnot [ G_K ] $ 
sub-module generated by \( \omega \) inside \( H^{2m}_\ellnot ( \overline{A} ) \). Let \( \lambdanot \) be a place of 
\( E \) lying over \( \ellnot \). Let \( M_\lambdanot := M_\ellnot \otimes_{\Q_\ellnot} E_\lambdanot \). Then, by 
Theorem (\ref{CMCoho-lambda-adic}), \( M_\lambdanot \) is isomorphic (as \( E_\lambdanot  [ G_K ] \) module) to a sum 
of \( H_{J,\lambdanot} \) for certain subsets \( J \) of \( \Psi \) of size \( 2m \). Let us denote this set of subsets \( J \) of \( \Psi \) by \( \mathcal{J} \). Thus, \begin{equation}  \label{MLambdanot}
M_\lambdanot = M_\ellnot \otimes_{\Q_\ellnot} E_\lambdanot = \oplus_{J \in \mathcal{J} }H_{J,\lambdanot} .
\end{equation}
The right hand side of (\ref{MLambdanot}) can be realised as the \( \lambda_0 \) component of a family of  $\lambda $-adic representations, say \( \{ M_\lambda \} \), as follows: For any finite place \( \lambda \) of \( E \), let
\[
M_\lambda :=  \oplus_{J \in \mathcal{J}} H_{J,\lambda}.
\]
By the Corollary (\ref{lambdacompat}), \( \{ M_\lambda \} \) is a strictly compatible family of semi-simple \( \lambda \)-adic representations of \( G_K \). 
It is easy to see from the definition that the conductor of this family is bounded by the conductor of the family 
$\{H^{2m}_\lambda\}$. Moreover, the conductor of $H^{2m}_\lambda$ can be bounded in terms of $m$ and the conductor $N$ 
of $A$. In particular, we can get  a bound depending on $N$ and the dimension $d$ of $A$ that majorizes the conductor 
of all the $\{H^{2m}_\lambda\}$. 
\medskip\par
%By the theory of complex multiplication (see \cite{Serre-Tate}, p. 512), $E \subseteq F$ and so 
We have $n_E \leq (2d)!$ and the discriminant satisfies
$$
\log d_E \ \leq (2d)!\log d_F.
$$
Notice that
$$
w = m \leq d,
$$
and the dimension of the $M_\lambda$ is equal to 
$$
\binom{2d}{2m} \ \leq 2^{2d}.
$$
Thus, if we set
\begin{equation}\label{definitionC}
C(N,d,F,K) = c_1B(2^{4d}(2d+1)!N\log d_F, K, (2d)!, 2^{2d})^c
\end{equation}
where $c,c_1$ and $B$ are as in Lemma \ref{Forum}, then 
applying Lemma \ref{Forum} to
\( \{ M_\lambda \} \), we get that \( G_K \) acts as scalars on \( M_\lambdanot = \Q_\ellnot [ G_K ]( \omega )   
\otimes_{\Q_\ellnot} E_\lambdanot \). This implies that \( M_\lambdanot = \Q_\ellnot \cdot \omega \otimes_{\Q_\ellnot} E_\lambdanot \). Thus, \( 1 = \dim_{E_\lambdanot } M_{\lambdanot} = \dim_{\Q_\ellnot} M_{\ellnot}  \), proving Theorem (\ref{finitebound}).  
\qed

\section{Proof of Theorem \ref{fieldofdefinition}}
Denote by \( P ( A_v, T ) \) the characteristic polynomial of \( Frob_v \). Writing
\[
P ( A_v , T ) = \prod ( 1 - \alpha_i T ) \ \in \Z[T]
\]
we see that the \( \alpha_i \) are algebraic of degree \( \leq \deg P( A_v , T ) = 2 \dim A_v = 2 \dim A \). 

Moreover, for any \( n \geq 1 \), if \( k_{v,n} \) denotes the extension of \( k_v \) of degree \( n \),
\[
P ( A_v / k_{v,n} , T ) = \prod ( 1 - \alpha_i^n T ).
\]
Now,
\[
%\ord_{s = k} L ( H_\el^{2k}( \bar{A_v} / k_{v,n} ), s ) 
\dim \TT_\ell^k ( \overline{A_v}, k_{v,n} ) = \# \{ I~: ~ \alpha_I^n = q^{kn} \},
\]
here \( \alpha_I = \prod_{ \alpha_i \in I} \alpha_i \) where  \( I \) runs over subsets of 
\( \{ \alpha_1 , \cdots, \alpha_{2d} \} \) of cardinality \( 2k \). 
%Now suppose \( n \) is the least positive integer \( n \) such that all Tate cycles on 
%\( A_v \) are defined over \( k_{v,n} \). 
The right hand side is equal to
\[
\# \{ I~: ~ \alpha_I = q^k \zeta_n ~\emph{for~some~} \zeta_n \in \mu_n \}
\]
But \( \alpha_I \) is an algebraic number of degree \( \leq b_{2k} (A) \ = \ \dim H^{2k}_\ell ( \overline{A} ) 
\leq \binom{2d}{2k} \). Hence if \( \alpha_I = q^k \zeta_n \) then \( n \) is bounded. In other words, all Tate classes are defined over an extension of \( k_v \) of degree bounded independently of \( v \). 
\qed

\section{Proof of Theorem \ref{CM}}
%It is well-known that under our assumption, the \( L \)-function associated to \( H^1_\el ( \bar{A} ) \) factors as
%\[
%L ( H^1_\el ( \bar{A} ), s ) = \prod_{i=1}^{2d} L ( \psi_i , s)
%\]
%where \( \psi_1, \ldots, \psi_{2d} \) are Hecke characters of weight 1 of \( K \). Moreover, for 
%\( 1 \leq i \leq d \), we have
%\[
%\psi_i \psi_{i+d} = \N
%\]
%where \( \N \) is the Hecke character whose infinite component satisfies
%\[
%x \mapsto \prod_{\sigma: K \hookrightarrow \C} x^{\sigma} .
%\]
%Since
%\[
%H^{2k}_\el ( \bar{A} ) = \wedge^{2k} H^1_\el ( \bar{A} ),
%\]
%we have
%\begin{equation}
%L ( H^{2k}_\ell ( \bar{A} ), s ) = \prod_I L ( \psi_I ,  s ) ,
%\end{equation}

%Thus, the order of pole of this \( L \)-function \( L ( H^{2k}_\ell ( \bar{A} ), s ) \) at \( s = k + 1 \) is equal to 
%the number of tuples \( I \) with \( \psi_I = \N^k \). 

By definition \(  \TT_\ell^k (A)\) consists of cohomology classes \( x \in
H_\el^{2k} ( \overline{A} ) \) on which \( G_K \) acts by the character
\( \chi_\ell^k \), where \( \chi_\ell \) is the cyclotomic character of \( G_K \) acting on the Tate module 
\( \Z_\ell (1) \). Note that we are assuming that \( K \) is sufficiently large so that all the Tate classes appear 
over \( K \). By Theorems (\ref{ShTaCMCohoStruct}) and (\ref{CMCoho-lambda-adic}), we see that \( G_K \) acts on
the 1-dimensional subspaces \( H_I \) by the Hecke character \( \psi_I
\). Here, 
\[
\psi_I = \prod_{j=1}^{2k} \psi_{i_j} ,
\]
where $I$ is the subset \( \{ i_1, \ldots, i_{2k} \} \) 
of \( \{ 1 , \cdots, 2d \} \).
Thus,
%(Theorem 3, p. 171 of \cite{Pohlmann}), 
\begin{equation} \label{Tate-global}
\dim  \TT_\el^{k} ( A ) =  \# \{ I ~ | ~ \psi_I = \chi_\ell^k \}.
\end{equation} 
Similiarly, for a finite place \( v \) of \( K \) of good reduction for \( A \), 
\begin{equation}\label{Tate-local}
\dim \TT_l^k ( A_v ) = \# \{ I ~:~ \psi_I ( Frob_v ) = \zeta_n ( \N v )^k \}
\end{equation}
where \( \zeta_n \) is some \( n \)-th root of unity where \( n \leq \binom{2d}{2k} \) is bounded indepedently of \( v \)
as proved in Theorem (\ref{fieldofdefinition}). Indeed, by Theorems (\ref{ShTaCMCohoStruct}) and (\ref{CMCoho-lambda-adic}), we see that \( Frob_v  \) acts on the 1-dimensional vector subspaces \( H_I \) by multiplication by \( \psi_I ( Frob_v ) \). On the other hand, by above and by Theorem (\ref{fieldofdefinition}), \( \TT_\ell^k ( A_v ) \) consists precisely of those cohomology classes 
on which \( Frob_v \) acts by \( \chi_\ell^k (Frob_v ) = \zeta_n ( \N v )^k \), and so \eqref{Tate-local} follows.
 
Thus, for all finite places \( v \) of good reduction, \eqref{Tate-global} and \eqref{Tate-local} implies:
\begin{eqnarray*}
S := \{ I~ :~ \psi_I = \chi_\ell^k \} \subset S_v := \{ I~ : ~ \psi_I ( Frob_v ) = ( \N v )^k \} 
\end{eqnarray*}
Let \[ T := \{ v \ | \ S \subsetneq  S_v \} . \] 
For any \(  v \in T \), there exists some \( J \notin S \) (\( J \) depends on \( v \)) of cardinality \( 2k \) such that \( \psi_J ( Frob_v ) \neq \zeta_n ( \N v)^k \), where \( \zeta_n \) is some \( n \)-th root of unity. 
We can write \( T = \cup_{J \notin S} T_J \), 
\[
T_J  := \{ v \in T \ | \ \psi_J ( Frob_v ) = \zeta_n ( \N v )^k \}  
\]
We want to prove that \( T \) is of density \( 0 \). Suppose not. By above, since there are only finitely many index sets \( J \), there exists a \( J_0 \notin S \) such that \( \delta ( T_{J_0} ) > 0 \). Hence, for all \( v \in T_{J_0} \), \( \psi_{J_0}^n (Frob_v) = 
( \N v )^{kn} = \chi_\ell^{kn} ( Frob_v ) \). Applying Theorem 2, p. 163 of \cite{Rajan-smo-l} to the characters \( \psi_{J_0}^n \) and \( \chi_\ell^{kn} \), we deduce that 
\[
\psi_{J_0}^n = \chi_\ell^{kn} \chi_1
\]
where \( \chi_1 \) is a character of finite order. Hence \( \psi_{J_0}^n = \chi_\ell^{kn} \) when restricted to
\( G_L := \Gal ( \overline{K} / L ) \), where \( L \) is the fixed field cut out by \( Ker \!( \chi_1 )\). Thus, \( \psi := 
\psi_{J_0}^n \cdot \chi_\ell^{-k} \) is a character of \( G_L \) of order \( n \). Thus \( \psi_{J_0} = \chi_\ell^{k} \) when restricted \( Gal ( \overline{K} / L_\psi ) \), where \( L_\psi \) is the number field cut out by \( Ker \! ( \psi ) \). This implies that \( J_0 \) contributes to a \emph{new} Tate cycle on \( A \). However, this is a contradiction, since by assumption all the Tate cycles are defined over \( K \) and that \( J_0 \notin S \). This proves that \( T \) has density \( 0 \) and that proves the result.
\qed


\begin{thebibliography}{100}

%\bibitem{Buchweitz}
%R. Buchweitz and H. Flenner, A semiregularity map for modules and applications to deformations, {\em Compositio Math.}, %137(2003), pp. 135--210.

%\bibitem{DeligneHodge}
%P. Deligne, Hodge cycles on Abelian varieties, (notes by J. Milne) {\em Lecture notes in mathematics}, Springer-Verlag 
%900, 1982, pp. 9--100. 

\bibitem{Ka-Me}
N. M. Katz and W. Messing, Some consequences of the Riemann hypothesis for varieties over finite fields, 
Inventiones Math., {\bf 23}(1974), 73--77.

\bibitem{Lagarias-Odlyzko}
J. Lagarias and A. Odlyzko, Effective versions of the Chebotarev Density Theorem, in: {\em Algebraic Number Fields}, 
pp. 409-464, ed. A. Fr\"ohlich, Academic Press, New York, 1977.

\bibitem{Lang-CM}
S. Lang, Complex Multiplication, Springer-Verlag, 1983.

\bibitem{XLI}
X. Li, The smallest prime that does not split completely in a number field, {\em Algebra and Number Theory}, {\bf 6-6} (2012), 1061--1096

\bibitem{RMKM}
M. Ram Murty and V. Kumar Murty, Non-vanishing of \( L \)-functions and applications, Birkhauser, Basel, 1997.

\bibitem{forum}
V. Kumar Murty, The least prime that does not split completely, {\em Forum Math.}, {\bf 6}(1994), 555--565.

%\bibitem{Mumford}
%D. Mumford, Families of abelian varieties, Algebraic Groups and Discontinuous Subgroups, {\em Proc. Sympos. Pure Math.}, 
%Vol. 9, Amer. Math. Soc., Providence, 1966, pp. 347--351.

\bibitem{MurtyPatankar}
V. Kumar Murty and V. Patankar, Splitting of Abelian varieties, {\em Intl. Math. Res. Notices}, 2008, No. 12, Art. ID rnn033, 27 pp.

\bibitem{Pohlmann}
H. Pohlmann, Algebraic cycles on Abelian varieties of complex multiplication type, {\em Annals of Math}, {\bf 88}(1968),161--180.

\bibitem{Rajan-smo-l}
C. S. Rajan, On strong multiplicity one for \( \ell \)-adic Representations, {\em Inter. Math. Res. Notices}, {\bf 3}(1998), 161--172.

%\bibitem{Rajan-smo-gln}
%C. S. Rajan, Refinement of strong multiplicity one for automorphic representations of GL(N), {\em %Proc. Amer. Math. Soc.}, 
%{\bf 128}(1999), 691--700.

%\bibitem{Rajan-hecke-inf-order} 
%C. S. Rajan, Distribution of values of Hecke characters of infinite order, {\em Acta Arith.}, {\bf %85}(1998), 279--291.

\bibitem{Serre-Abelian-l-adic}
J.-P. Serre, {\em Abelian \( \ell \)-adic representations and elliptic curves}, Benjamin, New York, 1968.

\bibitem{Serre-DPP-Conductor-l-adic}
J.-P. Serre, Facteurs locaux des fonctions z\^eta des vari\'et\'es alg\'ebriques (d\'efinitions et conjectures), {\em S\'eminarire Delange-Pisot-Poitou}, 1969/70, No. 19. 

\bibitem{Serre-Bourbaki}
J.-P.Serre, Valeurs propres des endomorphismes de Frobenius, {\em S\'em. Bourbaki}, no. 446, 1974, Lecture Notes in Math., vol. 431, Springer-Verlag, 1974, pp. 190-204. 

\bibitem{Serre-l-adic}
J.-P. Serre, Representations \( \ell \)-adiques, in: {\em Kyoto Int. Symposium on Algebraic Number Theory}, (1977), pp. 177-193. (See also, {\em Collected Papers}, Volume III, Springer-Verlag, pp. 384--400).

\bibitem{Serre-IHES}
J.-P. Serre, Quelques applications du th\'eo\`eme de densit\'e de Chebotarev, {\em Publ. Math. IHES}, {\bf 54}(1981), 323-401.

\bibitem{Serre}
J.-P. Serre, Propri\'et\'es Conjecturales des groupes de Galois motiviques et des repr\'esentations \( \ell \)-adiques in: {\em Motives}, eds. U. Janssen et.al., {\em Proc. Symp. Pure Math.}, Part 1, {\bf 55}(1994), Amer. Math. Soc., Providence, pp. 377--400. 

\bibitem{Serre-Tate}
J.-P. Serre and J. Tate, Good reduction of Abelian varieties, {\em Annals of Math.}, {\bf 88}(1968), 492--517.

\bibitem{Shimura}
G. Shimura, Introduction to the Arithmetic Theory of Automorphic Functions, Princeton University Press, 1994. 

\bibitem{Stark}
H. M. Stark, Some effective cases of the Brauer-Siegel theorem, {\em Inventiones Math.}, {\bf 23}(1974), 135--152.

\bibitem{Tate-Conj}
J. Tate, Algebraic cycles and poles of zeta functions, in: {\em Arithmetical Algebraic Geometry}, ed. O. F. G. Schilling, pp. 93--110, Harper and Row, New York, 1965.

\bibitem{Tate}
J. Tate, Conjectures on algebraic cycles in $\ell$-adic cohomology, in: {\em Motives}, eds. U. Janssen et. al., 
{\em Proc. Symp. Pure Math.}, {\bf 55} (1994), pp. 71--83, Amer. Math. Soc., Providence, 1994.

\bibitem{Vaaler-Voloch} 
J. D. Vaaler and J. F. Voloch, The Least Nonsplit Prime in Galois Extensions of \( \Q \), {\em J. Number Theory}, {\bf 85} (2000), 320--335.

%\bibitem{Yoshida}
%H. Yoshida, Abelian varieties with  complex multiplication  and representations of the Weil groups, %{\em Annals of Math.}, {\bf 114} (1981), 87--102.  
 
\end{thebibliography}
\end{document}